\let\leq\leqslant
\let\geq\geqslant
\let\le\leqslant
\let\ge\geqslant
\let\epsi\varepsilon
\let\rho\varrho
\newcommand{\brac}[1]{{\left(#1\right)}}
\newcommand{\sbrac}[1]{{\left[#1\right]}}
\newcommand{\tbrac}[1]{{\left<#1\right>}}
\newcommand{\set}[1]{\left\{#1\right\}}
\newcommand{\norm}[1]{{\left|#1\right|}}
\newcommand{\floor}[1]{{\left\lfloor #1 \right\rfloor}}
\newcommand{\ceil}[1]{{\left\lceil #1 \right\rceil}}
\newcommand{\Oh}[1]{O\brac{#1}}
\newcommand{\oh}[1]{o\brac{#1}}
\newcommand{\Nat}{\mathbb{N}}
\newcommand{\PosNat}{\Nat_{+}}
\newcommand{\Real}{\mathbb{R}}
\newcommand{\PosReal}{\Real_{+}}
\newcommand{\Rational}{\mathbb{Q}}
\newcommand{\Z}{\mathbb{Z}}
\newtheorem{theorem}{Theorem}
\newtheorem{corollary}[theorem]{Corollary}
\newtheorem{lemma}[theorem]{Lemma}
\newtheorem{conjecture}[theorem]{Conjecture}
\newtheorem{observation}[theorem]{Observation}
\newtheorem{question}{Question}
\theoremstyle{definition}
\newtheorem{definition}[theorem]{Definition}
\newtheorem{example}[theorem]{Example}
\newtheorem{remark}[theorem]{Remark}
\title{Online Coloring of Short Intervals}
\thanks{%
Joanna Chybowska-Sokół was partially supported by the National Science Center of Poland under grant no.~2016/23/N/ST1/03181.
Grzegorz Gutowski was partially supported by the National Science Center of Poland under grant no.~2016/21/B/ST6/02165.
Patryk Mikos was partially supported by the National Science Center of Poland under grant no.~2014/14/A/ST6/00138.
Adam Polak was partially supported by the Polish Ministry of Science and Higher Education program \emph{Diamentowy Grant} under grant no.~DI2012 018942.
}
\author[J.~Chybowska-Sok\'o\l{}]{Joanna Chybowska-Sok\'o\l{}}
\author[G.~Gutowski]{Grzegorz Gutowski}
\author[K.~Junosza-Szaniawski]{Konstanty Junosza-Szaniawski}
\author[P.~Mikos]{Patryk Mikos}
\author[A.~Polak]{Adam Polak}
\address[G.~Gutowski, P.~Mikos, A.~Polak]{
Institute of Theoretical Computer Science,
Faculty of Mathematics and Computer Science,
Jagiellonian University, Krak\'ow, Poland
}
\email{\{gutowski,mikos,polak\}@tcs.uj.edu.pl}
\address[J.~Chybowska-Sok\'o\l{},K.~Junosza-Szaniawski]{
Faculty of Mathematics and Information Science,
Warsaw University of Technology, Poland
}
\email{\{j.sokol,k.szaniawski\}@mini.pw.edu.pl}	
\begin{document}

\begin{abstract}
We study the online graph coloring problem restricted to the intersection
graphs of intervals with lengths in $[1,\sigma]$.
For $\sigma=1$ it is the class of unit interval graphs, and for $\sigma=\infty$
the class of all interval graphs. Our focus is on intermediary classes.
We present a $(1+\sigma)$-competitive algorithm, which beats the state of the art
for $1 < \sigma < 2$, and proves that the problem we study can be strictly easier
than online coloring of general interval graphs.
On the lower bound side, we prove that no algorithm is better than
$5/3$-competitive for any $\sigma>1$, nor better than
$7/4$-competitive for any $\sigma>2$, and that no algorithm beats the
$5/2$ asymptotic competitive ratio for all, arbitrarily large, values of $\sigma$.
That last result shows that the problem we study can be strictly harder than unit interval
coloring. Our main technical contribution is a recursive composition of strategies,
which seems essential to prove any lower bound higher than $2$.
\end{abstract}

\maketitle

\section{Introduction}

In the \emph{online graph coloring} problem the input graph is presented to the
algorithm vertex by vertex, along with all the edges adjacent to the already
presented vertices. Each vertex must be assigned a color, different than any of
its neighbors, immediately and irrevocably at the moment it is presented,
without any knowledge of the remaining part of the graph. The objective is to
minimize the number of used colors.
The problem and its variants attract much attention, both for theoretical
properties and practical applications in network multiplexing, resource
allocation, and job scheduling.

The standard performance measure, used to analyze online algorithms, is the
\emph{competitive ratio}, i.e., the worst-case guarantee on the ratio of the
solution given by an online algorithm to the optimal offline solution (see
Section~\ref{sec:results} for a formal definition).

In the general case, of online coloring of arbitrary graphs, there is no hope for
any algorithm with a constant competitive ratio. The best known
algorithm~\cite{Halldorsson97} uses $\Oh{\chi\cdot n/\log n}$ colors for
$n$-vertex $\chi$-colorable graphs, i.e.~it is $\Oh{n/\log n}$-competitive,
and there is a lower bound~\cite{HalldorssonS94} showing that no online graph
coloring algorithm can be $\oh{n/\log^2 n}$-competitive.
It is thus common to study the problem restricted to particular graph classes.

Having in mind the applications in scheduling, one of the important special
cases is the class of \emph{interval graphs}, i.e.~intersection graphs of
intervals on the real line.
The classic result is by Kierstead and Trotter~\cite{KiersteadT81},
who designed a $3$-competitive algorithm and proved a matching lower bound.
However, in the special case of \emph{unit interval graphs}, i.e.~intersection
graphs of intervals of a fixed length, already the simple greedy
FirstFit algorithm is $2$-competitive~\cite{EpsteinL05}.


Coloring unit interval graphs can model only a restricted scheduling setting,
with all jobs having the same processing time. On the other hand, allowing
arbitrary processing times, modeled by (general) interval graphs, might be a
too permissive setting, precluding efficient online algorithms. Thus, we ask what
happens in between the interval and unit interval graph classes. In particular,
we are interested in the optimal competitive ratio of online coloring algorithms
for intersection graphs of intervals of length restricted to a fixed range.
Formally, let us introduce the $\sigma$-interval coloring problem.

\begin{definition}
For $\sigma \ge 1$, the \emph{$\sigma$-interval coloring} problem asks:
Given a sequence of closed intervals $[l_1, r_1], [l_2, r_2], \ldots, [l_n, r_n]$,
such that $1 \le (r_i - l_i) \le \sigma$ for every $i \in [n]$,
find a sequence of \emph{colors}, $c_1, c_2, \ldots, c_n$, such that
\[\forall_{i \neq j}\ \big([l_i, r_i] \cap [l_j, r_j] \neq \emptyset\big) \implies
\brac{c_i \neq c_j}\text{,}\]
minimizing the number of distinct colors $\big|\{c_1, c_2, \ldots, c_n\}\big|$.
\end{definition}

We study the problem in the online setting, i.e., intervals are presented one by
one, in an arbitrary order, and each interval has to be colored immediately and
irrevocably after it is presented.

Note that we choose to include the interval representation in the input, instead
of presenting the mere graph. It seems a plausible modeling choice given the
scheduling applications. Moreover, it lets algorithms exploit geometric properties
of the input, and not only structural graph properties.
Naturally, any lower bound obtained for this variant of the problem transfers to
the harder variant with no interval representation in the input.


Among others, we look to answer the following two apparent questions.

\begin{question}
\label{que:easier}
Is $\sigma$-interval coloring strictly easier than interval coloring?
\end{question}

\begin{question}
\label{que:harder}
Is $\sigma$-interval coloring strictly harder than unit interval coloring?
\end{question}

Naturally, we ask these questions in the online setting, where easier (harder)
mean smaller (greater) best possible competitive ratio.

\subsection{Our Results}
\label{sec:results}

Before we state our results, let us give a formal definition of the competitive
ratio. In this paper we focus on the \emph{asymptotic} competitive ratio.

\begin{definition}
Let $A$ be an online graph coloring algorithm, and let $A(\chi)$ denote the
maximum number of colors $A$ uses to color any graph which can be colored offline
using $\chi$-colors (i.e.~its chromatic number is at most $\chi$).
We say that $A$ has the asymptotic competitive ratio $\alpha$ (or that $A$ is
$\alpha$-competitive, for short), if
$\limsup_{\chi\to\infty} \frac{A(\chi)}{\chi} \le \alpha$.
\end{definition}

Another popular performance measure for online algorithms is the \emph{absolute}
competitive ratio, which requires that $\frac{A(\chi)}{\chi} \le \alpha$
holds for \emph{all} $\chi$ (and not only in the limit). The choice of the asymptotic,
instead of absolute, competitive ratio for our analysis makes things easier for
the algorithm and harder for the lower bounds. In our algorithm, sadly, we do
not know how to get rid of a constant additive overhead, which vanishes only
with growing $\chi$. This is in contrast to the FirstFit and Kierstead-Trotter
algorithms, whose claimed competitive ratios are not only asymptotic but also
absolute. The good side is, our lower bounds for the asymptotic competitive ratio
imply the identical lower bounds for the absolute competitive ratio.

\subsubsection*{Algorithm.}
Our positive result is the existence of a $(1+\sigma)$-competitive algorithm.

\begin{restatable}{theorem}{AlgoThm}
\label{thm:algo}
For every $\sigma \in \Rational$, $\sigma \geq 1$,
there is an algorithm for online $\sigma$-interval coloring with $1+\sigma$
asymptotic competitive ratio.
\end{restatable}

Note that for $\sigma' > \sigma$ every $\sigma'$-interval coloring algorithm is
also a correct $\sigma$-interval coloring algorithm, with the same upper bound
on its competitive ratio. Therefore, for $\sigma \in \Real \setminus \Rational$
Theorem~\ref{thm:algo} yields an online $\sigma$-interval coloring algorithm
with a competitive ratio arbitrarily close to $1+\sigma$.
This distinction between rational and irrational values of $\sigma$ becomes 
somewhat less peculiar in the light of the results of Fishburn and
Graham~\cite{Fishburn85}, who proved, among other things, that the classes
$C(\sigma)$'s of graphs with interval representation with lengths in $[1, \sigma]$
are \emph{right-continuous}, i.e.~$C(\sigma) = \bigcap_{\tau > \sigma} C(\tau)$,
exactly at irrational $\sigma$'s.

Until now, the state-of-the art was
the $2$-competitive FirstFit algorithm~\cite{EpsteinL05} for $\sigma=1$, and
the $3$-competitive Kierstead-Trotter algorithm~\cite{KiersteadT81} for $\sigma>1$.
Our algorithm matches the performance of FirstFit for $\sigma=1$,
and beats the Kierstead-Trotter algorithm for $\sigma<2$.

The algorithm is inspired by the recent result for online coloring of unit disk
intersection graphs~\cite{UnitDisk}.
We cover the real line with overlapping blocks, grouped into a constant number
of classes, with each class constituting a partition of the real line.
We assign to each class a private set of available colors.
When an interval is presented, the algorithm chooses, in a round-robin fashion,
a block containing the interval's left end, and greedily picks a color from
the block's class.

While not being overly complicated, the algorithm already answers positively our
Question~\ref{que:easier}. Indeed, since there cannot exist a
less-than-$3$-competitive algorithm for (general) interval
coloring~\cite{KiersteadT81}, $\sigma$-interval coloring is strictly easier,
for $\sigma < 2$.

\subsubsection*{Lower Bounds.}

Our negative results include a series of constructions with the following consequences.

\begin{restatable}{theorem}{FiveThirdsThm}
\label{thm:53}
For every $\sigma > 1$ there is no online algorithm for $\sigma$-interval
coloring with the asymptotic competitive ratio less than $5/3$.
\end{restatable}

\begin{restatable}{theorem}{SevenFourthsThm}
\label{thm:74}
For every $\sigma > 2$ there is no online algorithm for $\sigma$-interval
coloring with the asymptotic competitive ratio less than $7/4$.
\end{restatable}

\begin{restatable}{theorem}{FiveHalvesThm}
\label{thm:52}
For every $\epsi > 0$ there is $\sigma \geq 1$ such that there is no online
algorithm for $\sigma$-interval coloring with the asymptotic competitive ratio
$5/2 - \epsi$.
\end{restatable}

The following, more illustrative, statement is a direct corollary of Theorem~\ref{thm:52}.

\begin{corollary}
There is no online algorithm that works for all $\sigma \geq 1$ and uses at most
$2.499\cdot\omega + f(\sigma)$ colors for $\omega$-colorable graphs
(for any function $f$).
\end{corollary}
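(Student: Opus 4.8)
The plan is to derive this as an immediate contrapositive of Theorem~\ref{thm:52}. Suppose, for contradiction, that such an algorithm exists: a single online coloring algorithm $A$ together with a function $f$ such that, for every $\sigma \ge 1$, $A$ colors every $\omega$-colorable $\sigma$-interval graph with at most $2.499\cdot\omega + f(\sigma)$ colors. First I would fix $\epsi := 5/2 - 2.499 = 1/1000 > 0$ and apply Theorem~\ref{thm:52} to obtain a value $\sigma_0 \ge 1$ for which no online $\sigma_0$-interval coloring algorithm is $(5/2 - \epsi)$-competitive, i.e.~has asymptotic competitive ratio at most $2.499$.

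Next I would check that $A$, viewed as an algorithm for the $\sigma_0$-interval coloring problem, violates this. Since interval graphs are perfect, the chromatic number of every $\sigma_0$-interval graph equals its clique number, so the assumed bound reads $A(\chi) \le 2.499\cdot\chi + f(\sigma_0)$ for all $\chi$, where $f(\sigma_0)$ is a constant not depending on $\chi$. Hence $\limsup_{\chi\to\infty} A(\chi)/\chi \le \limsup_{\chi\to\infty}\bigl(2.499 + f(\sigma_0)/\chi\bigr) = 2.499 = 5/2 - \epsi$, so $A$ is $(5/2-\epsi)$-competitive for $\sigma_0$-interval coloring, contradicting the choice of $\sigma_0$.

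The one point worth making explicit, rather than a genuine obstacle, is why the additive term $f(\sigma)$ is harmless: once $\sigma_0$ is chosen it is a fixed constant, so $f(\sigma_0)$ contributes nothing to the asymptotic ratio. This is exactly where the asymptotic (as opposed to absolute) competitive ratio in Theorem~\ref{thm:52} is used, and where the order of quantifiers there (``for every $\epsi$ there is $\sigma$'') matters — we get to pick $\sigma_0$ after seeing the slack $2.499$, and then absorb $f(\sigma_0)$ into the limit. Beyond this bookkeeping, the corollary is merely Theorem~\ref{thm:52} restated with $2.499$ in place of $5/2-\epsi$ and the additive slack folded into $f$.
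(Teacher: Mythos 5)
Your argument is correct and is exactly the intended one: the paper offers no explicit proof, simply declaring the statement a direct corollary of Theorem~\ref{thm:52}, and your write-up supplies precisely the bookkeeping (fixing $\epsi = 1/1000$, choosing $\sigma_0$ afterwards, and absorbing the constant $f(\sigma_0)$ into the $\limsup$) that makes ``direct'' rigorous.
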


Theorem~\ref{thm:52} gives a positive answer to our Question~\ref{que:harder}.
Indeed, while FirstFit is $2$-competitive for unit interval coloring, there is
no $2$-competitive algorithm for $\sigma$-interval coloring (for large enough
$\sigma$), therefore the latter problem is strictly harder.
Working out the exact number from our proof, this starts to be the case for
$\sigma > 2^{78}$, however we did not attempt to optimize the constant.

Our proofs of Theorems~\ref{thm:53} and~\ref{thm:74} can be considered as
generalizations of the $3/2$ lower bound for online coloring of unit interval
graphs by Epstein and Levy~\cite{EpsteinL05}. In particular, we heavily use
their \emph{separation strategy}, which also appears in~\cite{Azar06}.

Our main technical contribution is a recursive composition of strategies,
which seems essential to prove any lower bound higher than $2$.
Our $5/2$ lower bound (Theorem~\ref{thm:52}) borrows also from
the work of Kierstead and Trotter~\cite{KiersteadT81}. However, in order to
control the length of intervals independently of the number of colors, we cannot
simply use the pigeonhole principle, as they did. Instead, we develop
Lemmas~\ref{lem:4sets} and~\ref{lem:4split}, which let us overcome this issue,
at a cost of a worse bound for the competitive ratio, i.e.~$5/2$ instead of $3$.

\subsection{Related Work}

Interval graphs have been intensively studied since the
sixties~\cite{Benzer59,Lekkeikerker62}, and, in particular, they are known to be
\emph{perfect}, i.e.~the chromatic number $\chi$ of an interval graph always
equals the size of the largest clique $\omega$ (see, e.g., \cite{Golumbic04}).
To construct an optimal coloring offline it is sufficient to color the graph greedily
in a nondecreasing order of the left ends of the intervals.

For the most basic approach for online coloring, that is the FirstFit algorithm,
the competitive ratio for interval graphs is still not known exactly.
After a series of papers, the most recent results state that FirstFit is at least
$5$- and at most $8$-competitive~\cite{Kierstead16,Narayanaswamy08}.
Kierstead and Trotter~\cite{KiersteadT81} designed a more involved online
coloring algorithm, which uses at most $3\omega-2$ colors for $\omega$-colorable
interval graphs, and proved that there exists a strategy that forces any online
coloring algorithm to use exactly that number of colors. The same lower and upper
bounds were obtained independently by Chrobak and Ślusarek~\cite{Chrobak88,Slusarek89}.
For intersection graphs of intervals of unit length any online coloring algorithm
uses at least $\frac{3}{2}\omega$ colors, and FirstFit uses at most $2\omega-1$
colors~\cite{EpsteinL05}.

It seems a natural question to ask if it is possible to improve the bound of
$3\omega-2$ by assuming that interval lengths belong to a fixed range.
The study of interval graphs with bounded length representations was initiated
by Fishburn and Graham~\cite{Fishburn85}. However, it focused mainly on
the combinatorial structure, and not its algorithmic applications.

Kierstead and Trotter~\cite{KiersteadT81} give, for every $\omega\in\PosNat$,
a strategy for Presenter to construct an $\omega$-colorable set of intervals
while forcing Algorithm to use at least $3\omega-2$ colors. However, the lengths
of presented intervals increase with the increasing $\omega$. For this reason,
with the interval length restricted to $\sbrac{1,\sigma}$, their lower bound
is only for the absolute competitive ratio and does not exclude, say, an algorithm
that always uses at most $2\omega+\sigma^{10}$ colors.
On the contrary, in Theorem~\ref{thm:52} we rule out the existence of such an algorithm.

\section{Algorithm}

\AlgoThm*

\begin{proof}
Let us present an algorithm which, in principle, works for any real $\sigma$,
however only for a rational $\sigma$ it achieves the declared competitive ratio.
The algorithm has a positive integer parameter $b$. Increasing the parameter
brings the asymptotic competitive ratio closer to $1+\sigma$ at the cost of
increasing the additive constant. More precisely, given an $\omega$-colorable
set of intervals our algorithm colors it using at most
$\ceil{b\cdot(1 + \sigma)} \cdot \brac{\frac{\omega}{b} + b - 1}$
colors, and thus its competitive ratio is
$\frac{\ceil{b\cdot(1 + \sigma)}}{b} + \Oh{1/\omega}$.
For a rational $\sigma$, in order to obtain exactly the declared $1 + \sigma$ asymptotic competitive ratio it is
sufficient to set $b$ to the smallest possible denominator of a simple fraction
representation of $\sigma$.
Let $\varphi = \ceil{b\cdot(1 + \sigma)}$.
The algorithm will use colors from the set $\set{0, 1, \ldots, \varphi-1} \times \Nat$.

Now, let us consider the partition of the real line into \emph{small blocks}.
For $i \in \Z$, the $i$-th small block occupies the interval
$[i\cdot\frac{1}{b}, (i+1)\cdot\frac{1}{b})$.
Moreover, we define \emph{large blocks}. The $i$-th large block occupies the interval
$[i\cdot\frac{1}{b}, i\cdot\frac{1}{b} + 1)$. See Figure~\ref{fig:blocks}.

\begin{figure}
\centering
\includegraphics{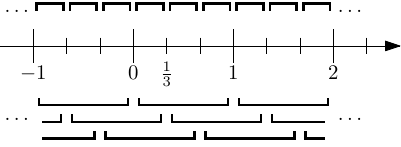}
\caption{Small blocks (above), and large blocks (below), for $b=3$}
\label{fig:blocks}
\end{figure}

Let us point out certain properties of the blocks, which will be useful in the further analysis.
Each large block is the union of $b$ consecutive small blocks, and each small block is a subset of $b$ consecutive large blocks.
Further, the length of a large block is $1$, and for any two intervals of length in $\sbrac{1,\sigma}$ that both have the left endpoint in the same large block, the two intervals intersect.
Thus, the intervals whose left endpoints belong to a fixed large block form a clique.
Finally, if the indices of two large blocks differ by at least $\varphi$, then any two intervals -- one with the left endpoint in one block, the other with the left endpoint in the other -- do not intersect.

With each small block the algorithm associates a \emph{small counter}, and
with each large block the algorithm associates a \emph{large counter}.
Let $S_i$ denote the small counter of the $i$-th small block, and $L_j$ denote the large counter of the $j$-th large block.
Initially, all the small and large counters are set to zero.

The small counter $S_i$ tracks how many intervals with left endpoint in the $i$-th small block appeared so far.
Based on the small counter the algorithm assigns, in a round-robin fashion, the processed interval to one of the $b$ large blocks containing its left endpoint.
The large counter $L_j$ tracks how many intervals were assigned to the $j$-th large block so far.
The color of the processed interval will depend on the large counter.

To assign a color to an interval, the algorithm proceeds as follows:
\begin{enumerate}
\item Let $i$ be the index of the small block containing the left endpoint of the interval.
\item Let $j$ be the index of the large block containing the left endpoint of the interval such that $j \equiv S_i \pmod{b}$. Note that there is exactly one such $j$.\label{item:round_robin}
\item Assign to the interval the color $(j\ \mathrm{mod}\ \varphi, L_j)$.
\item Increase the small counter $S_i$ by one.
\item Increase the large counter $L_j$ by one.
\end{enumerate}

First let us argue that the algorithm outputs a proper coloring.
Consider any two intervals which were assigned the same color.
Let $j_1$ and $j_2$ denote the indices of the large blocks selected for these intervals by the algorithm.
Since the colors of the two intervals have the same first coordinates, we have that $j_1 \equiv j_2 \pmod{\varphi}$.
However, since the second coordinates, which are determined by large counters, are also the same, $j_1$ and $j_2$ must be different, and thus they differ by at least $\varphi$. Hence the left endpoints of the large blocks $j_1$ and $j_2$ are at least $1+\sigma$ apart, and the two considered intervals do not intersect, thus the coloring is proper.

It remains to bound the number of colors in terms of the clique number $\omega$.
Let $j$ be the index of the maximum large counter $L_j$ at the end of the algorithm.
Clearly, the algorithm used at most $\varphi \cdot L_j$ colors in total.
Let $C$ denote the set of intervals with the left endpoints in the $j$-th large block and colored with a color in $\set{j\ \mathrm{mod}\ \varphi} \times \Nat$.
Observe that $\norm{C} = L_j$.
Let $x_k$ denote the number of intervals in $C$ which have the left endpoint in the $k$-th small block.
Recall that the $j$-th large block is the union of $b$ small blocks -- indexed $j$, $j+1$, \ldots, $j+b-1$ -- and thus $L_j = x_j + x_{j+1} + \cdots + x_{j+b-1}$.
Because of the the formula $j \equiv S_i \pmod{b}$ in the step~(\ref{item:round_robin}) of the algorithm, the large counter $L_j$ is incremented by one out of every $b$ intervals with the left endpoints in any given small block.
Hence either $x_k = \lfloor\frac{S_k}{b}\rfloor$ or $x_k = \lfloor\frac{S_k}{b}\rfloor+1$.
In particular
\[
S_k \ge b \cdot (x_k - 1) + 1\text{.}
\]
Let $D$ denote the set of all intervals with the left endpoints in the $j$-th large block.
We can bound the number of intervals in $D$
\[
\norm{D}\ =\ \sum_{k=j}^{j+b-1}S_k\ \ge\ \sum_{k=j}^{j+b-1}\brac{b \cdot (x_k - 1) + 1}\ =\ b \cdot (L_j - b) + b\text{.}
\]
Recall that $D$ is a clique and thus the clique number $\omega$ of the input graph is at least the size of $D$.
Therefore $L_j \le \frac{\omega + b \cdot (b - 1)}{b}$, and the algorithm used
at most 
\[
\ceil{b\cdot(1 + \sigma)} \cdot \brac{\frac{\omega}{b} + b - 1}
\]
colors.
\end{proof}

\section{Lower Bounds}

\subsection{Technical Overview}


\subsubsection*{Algorithm-vs-Presenter game.}

In order to prove lower bounds for online problems, it is often convenient to
look at the problem as a combinatorial game between two players, Algorithm
and Presenter. In our case, in each round Presenter reveals an interval, and
Algorithm immediately and irrevocably assigns a color to it.
Algorithm tries to minimize the number of different colors it assigns.
Contrarily, the Presenter's goal is to force Algorithm to use as many
colors as possible, while guaranteeing that the introduced set of intervals is
colorable with a smaller number of colors, and contains only short intervals.
A strategy for Presenter implies a lower bound on the competitive ratio of any
algorithm solving the problem.

\subsubsection*{Separation Strategy.}

\begin{figure}
\centering
\begin{tikzpicture}

\draw [very thick, color=red    ] (0.0,1.5) -- (4.0,1.5);
\draw [very thick, color=green  ] (0.2,1.2) -- (4.2,1.2);
\draw [very thick, color=blue   ] (0.4,0.9) -- (4.4,0.9);
\draw [very thick, color=cyan   ] (0.6,0.6) -- (4.6,0.6);
\draw [very thick, color=yellow ] (0.8,0.3) -- (4.8,0.3);
\draw [very thick, color=magenta] (1.0,0.0) -- (5.0,0.0);

\draw [very thick, color=green] (5.5,0.0) -- (9.5,0.0);
\draw [very thick, color=red  ] (5.5,0.3) -- (9.5,0.3);
\draw [very thick, color=blue ] (5.5,0.6) -- (9.5,0.6);

\draw (7.5,0) node [anchor=north] {\emph{initial intervals}};
\draw (2.5,0) node [anchor=north] {\emph{separation intervals}};

\end{tikzpicture}
\caption{Separation strategy}
\label{fig:separation}
\end{figure}

Epstein and Levy~\cite{EpsteinL05} prove their $\frac{3}{2}$ lower bound for online
coloring of unit intervals by giving the following strategy for Presenter.
The strategy has three phases.


\begin{enumerate}
\item Presenter introduces a clique of $\frac{\omega}{2}$ equal (i.e.~located at the same place) \emph{initial intervals}. Algorithm has to use $\frac{\omega}{2}$ different colors. Let $\mathcal{X}$ denote the set of these colors.
\item Just to the left of the initial intervals, Presenter introduces a clique of $\omega$ \emph{separation intervals}. These intervals are not equal, they all have slightly different endpoints, and in total they occupy a place of length $1+\epsi$. None of them intersect the initial intervals. The specific way the separation intervals are presented ensures that all separation intervals with colors in $\mathcal{X}$ have their left (resp.~right) endpoints to the left of all left (resp.~right) endpoints of separation intervals with colors not in $\mathcal{X}$ (see Figure~\ref{fig:separation}).
In particular, each of $\frac{\omega}{2}$ right-most separation intervals has a color not in $\mathcal{X}$.
\item Presenter introduces a clique of equal $\frac{\omega}{2}$ \emph{final intervals} that intersect all the initial intervals, and $\frac{\omega}{2}$ right-most separation intervals.
\end{enumerate}
Algorithm has to use at least $\frac{3}{2}\omega$ different colors, but the largest clique size (i.e.~the offline optimum) is only $\omega$.

The second phase of the above strategy is an example of the \emph{separation strategy}.
The formal details are included in the proof of Lemma~\ref{lem:lower_32}.

\subsubsection*{Recursive Composition of Strategies.}
\label{sec:reccomp}

Let us generalize the above strategy.
Observe, that instead of presenting a clique in the first phase, Presenter can use an arbitrary strategy, presenting a set of intervals with the clique number equal to $\frac{\omega}{2}$ but possibly enforcing more than $\frac{\omega}{2}$ colors. Moreover, Presenter might be able to achieve a better trade-off by allowing a clique of size $\beta\omega$ (for some $0 < \beta < 1$) in the first phase, and introducing a clique of size $(1-\beta)\omega$ in the third phase. Actually, it turns out that if the strategy used in the first phase enforces a competitive ratio $\alpha$ (e.g.~presenting a clique has $\alpha=1$), then it is optimal to set $\beta=\frac{1}{1+\alpha}$.

Assume we have a strategy that enforces a competitive ratio $\alpha$,
uses intervals of length at most $\sigma$,
and needs a place of length $M$ on the real axis
(e.g.~presenting a clique gives $\alpha=\sigma=M=1$).
Consider the following strategy.
\begin{enumerate}
  \item Presenter plays the assumed strategy to obtain a set of initial intervals with the clique number $\frac{\omega}{1+\alpha}$. Algorithm has to use $\alpha\frac{\omega}{1+\alpha}$ different colors, denoted by $\mathcal{X}$.
  \item Presenter introduces a clique of $\omega$ separation intervals (of unit length), and at least the right-most $\omega-|\mathcal{X}| \geq \frac{\omega}{1+\alpha}$ of them get new colors, not in $\mathcal{X}$.
  \item Presenter introduces a clique of $\alpha\frac{\omega}{1+\alpha}$ final intervals of length $M+\epsi$ that intersect all the initial intervals, and the $\frac{\omega}{1+\alpha}$ right-most separation intervals.
\end{enumerate}
In total, the intervals can be colored with $\omega$ colors, but the algorithm uses at least
\[\alpha\frac{\omega}{1+\alpha}+ \frac{\omega}{1+\alpha} + \alpha\frac{\omega}{1+\alpha} = \frac{(2\alpha + 1)\omega}{1+\alpha}\]
colors, i.e.~it is at most $(2-\frac{1}{\alpha+1})$-competitive

Let $S_0$ be a trivial strategy that presents a clique of equal unit intervals.
Now, let $S_{i+1}$ be a strategy obtained as above by playing $S_{i}$ in the first phase.
It enforces a competitive ratio $\alpha_{i+1}$, uses intervals of length at most $\sigma_{i+1}$, and needs a place of length $M_{i+1}$, where
$\alpha_0 = \sigma_0 = M_0 = 1$, and each
$\alpha_{i+1} = 2 - \frac{1}{\alpha_{i}+1}$, $\sigma_{i+1} = M_{i}+\epsi$, $M_{i+1} = M_{i} + 1 + \epsi$.
By solving the recurrence equations we get the following corollary.

\begin{corollary}
\label{cor:informal_lower_32}
For every $n \in \PosNat$ and every $\epsi > 0$,
there is no online algorithm for $(n+\epsi)$-interval coloring
with the asymptotic competitive ratio less than $\frac{F_{2n+1}}{F_{2n}}$,
where $F_{n}$ is the $n$-th Fibonacci number ($F_0 = F_1 = 1$, $F_{n+2}=F_{n+1}+F_{n}$).
\end{corollary}

Note that this method cannot give a lower bound higher than
$\lim_{n \rightarrow \infty} \frac{F_{2n+1}}{F_{2n}} = \frac{1+\sqrt{5}}{2} \approx 1.61803$.
However, we can get arbitrarily close to this bound.
That is, for every $\epsi > 0$ there is a $\sigma$ and $\omega_0$ such that for each $\omega \geq \omega_0$ there is a strategy for Presenter to present intervals of length at most $\sigma$ and force Algorithm to use $\brac{\frac{1+\sqrt{5}}{2}-\epsi}\cdot\omega$ colors on an $\omega$-colorable set of intervals.

\begin{observation}\label{obs:phi}
There is no online algorithm that works for all $\sigma \geq 1$ and uses at most
$1.618\cdot\omega + f(\sigma)$ colors for $\omega$-colorable graphs
(for any function $f$).
\end{observation}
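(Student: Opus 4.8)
The plan is to obtain the statement as an immediate corollary of Corollary~\ref{cor:lower_32}, exploiting the fact that the ratios $\frac{F_{2n+1}}{F_{2n}}$ converge to the golden ratio $\frac{1+\sqrt{5}}{2}\approx 1.61803$ and, although approaching it from below, eventually exceed $1.618$.

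First I would fix an $n\in\PosNat$ large enough that $\frac{F_{2n+1}}{F_{2n}}>1.618$. Concretely $n=6$ works, since $\frac{F_{13}}{F_{12}}=\frac{377}{233}>1.618$ (indeed $233\cdot 1.618 = 376.994 < 377$), and $n=5$ is already too small. Fixing in addition $\epsi = 1$ (any positive value would do), Corollary~\ref{cor:lower_32} provides an $\tbrac{\alpha,\sigma,M}$-schema with $\alpha = \frac{F_{2n+1}}{F_{2n}}$, $\sigma = n+1$, and $M = n+2$. By definition, this schema consists, for every $\omega\in\PosNat$, of an $\tbrac{\omega,C_\omega,\sigma,M}$-strategy with $C_\omega = \alpha\omega - \oh{\omega}$; and by the definition of an $\tbrac{\omega,C_\omega,\sigma,M}$-strategy, playing this strategy forces Algorithm, on an $\omega$-colorable instance all of whose intervals have length in $[1,\sigma]$, to use at least $C_\omega$ colors.

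Then I would argue by contradiction. Suppose some online algorithm $A$ uses at most $1.618\cdot\omega + f(\sigma)$ colors for $\omega$-colorable graphs, for every $\sigma\ge 1$ and some function $f$. Instantiating at our fixed $\sigma = n+1$ turns $f(\sigma)$ into a mere additive constant $c := f(n+1)$, so $A$ never uses more than $1.618\cdot\omega + c$ colors on an $\omega$-colorable instance with interval lengths in $[1,n+1]$. Confronting $A$ with the $\tbrac{\omega,C_\omega,\sigma,M}$-strategy yields $\alpha\omega - \oh{\omega} \le C_\omega \le 1.618\cdot\omega + c$, hence $(\alpha - 1.618)\,\omega \le c + \oh{\omega}$. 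Since $\alpha - 1.618$ is a fixed positive constant, the left-hand side grows linearly in $\omega$ whereas the right-hand side is $\oh{\omega}$, so for all sufficiently large $\omega$ this is false — a contradiction.

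There is essentially no obstacle: all the real work is already encapsulated in Corollary~\ref{cor:lower_32}, and the only points requiring a moment's care are that an arbitrary function $f$ of $\sigma$ collapses to a constant once $\sigma$ is pinned down, and that the $\oh{\omega}$ slack built into the notion of a schema is dominated by the linear surplus $(\alpha-1.618)\,\omega$. Both are routine.
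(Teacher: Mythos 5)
Your proof is correct and follows exactly the route the paper intends: the Observation is stated as an immediate consequence of Corollary~\ref{cor:lower_32} and the preceding remark that $\frac{F_{2n+1}}{F_{2n}} \to \frac{1+\sqrt{5}}{2} > 1.618$, and your instantiation ($n=6$, $\frac{F_{13}}{F_{12}}=\frac{377}{233}>1.618$, $f(\sigma)$ collapsing to a constant once $\sigma$ is fixed, and the $\oh{\omega}$ slack being dominated by the linear gap) is precisely the routine verification the paper leaves implicit.
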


\subsubsection*{Overview of the \texorpdfstring{$5/2$}{5/2} Lower Bound.}

Our $5/2$ lower bound also works by recursively combining strategies, however the three phases are substantially different than above. In particular, in the first phase, instead of playing the simpler strategy only once, Presenter plays many instances of it, independently, side by side. Intuitively, either Algorithm uses mostly different colors for different instances, and it already uses too many colors, or there must be many colors shared by many instances.

So far the argument resembles the lower bound for interval graphs by Kierstead and Trotter~\cite{KiersteadT81}. They use pigeonhole principle to argue that if the total number of colors is bounded, then after playing exponentially many instances, there must be four instances that use exactly the same subset of colors. Thus, the number of instances -- and consequently the length of intervals in the subsequent phases -- required in their approach grows with the number of colors, which makes it infeasible in our setting.

To overcome the above difficulty we show (see Lemmas~\ref{lem:4sets} and~\ref{lem:4split}) that, if the total number of colors is small, there must be four groups of consecutive instances, which do not necessarily use exactly the same colors, but at least have large intersection.

\subsection{Preliminaries}

To properly capture asymptotic properties of the introduced strategies we give the following formal definitions.
\begin{definition}
\label{def:strategy}
For $\omega, C \in \PosNat$ and $\sigma, M \in \PosReal$, an \emph{$\tbrac{\omega,C,\sigma,M}$-strategy} is a strategy for Presenter that forces Algorithm to use at least $C$ colors subject to the following constraints:
\begin{enumerate}
\item the set of introduced intervals is $\omega$-colorable,
\item every introduced interval has length at least $1$ and at most $\sigma$,
\item every introduced interval is contained in the interval $\sbrac{0,M}$.
\end{enumerate}
\end{definition}
We are interested in providing strategies that achieve the biggest possible
ratio $\frac{C}{\omega}$ for large $\omega$. This motivates the following definition.
\begin{definition}
\label{def:schema}
An \emph{$\tbrac{\alpha,\sigma,M}$-schema} is a set of $\tbrac{\omega,C_\omega,\sigma,M}$-strategies for all $\omega \in \PosNat$ such that $C_\omega = \alpha\omega - \oh{\omega}$.
\end{definition}

The $\oh{\omega}$ term in the above definition accounts for the fact that sometimes in a proof we would like to introduce, say, $\frac{\omega}{2}$-clique. Then, for odd $\omega$'s a rounding is required, which results in small inaccuracies we need to control.

\begin{remark}
Note that the existence of an \emph{$\tbrac{\alpha,\sigma,M}$-schema} implies
a lower bound of $\alpha$ for the asymptotic competitive ratio of any online
algorithm solving the $\sigma$-interval coloring problem.
\end{remark}

To put the above definitions in context, note that Kierstead and Trotter~\cite{KiersteadT81} give, for all $\omega \in \PosNat$, an $\tbrac{\omega,3\omega-2,f(\omega),f(\omega)}$-strategy. 
However, their family of strategies does not yield an $\tbrac{\alpha, \sigma,M}$-schema, because the length of the presented intervals grows with $\omega$.

\begin{example}[$\tbrac{1,1,1}$-schema]
For any $\omega \in \PosNat$, a strategy that introduces the interval $\sbrac{0,1}$ in every round $1,\ldots,\omega$ is an $\tbrac{\omega,\omega,1,1}$-strategy.
The set of these strategies is a $\tbrac{1,1,1}$-schema.
\end{example}

In this section we show a series of constructions that use an existing schema to create another schema with different parameters.
The $\tbrac{1,1,1}$-schema given above is the initial step for those constructions.

Let $S$ be an $\tbrac{\omega,C,\sigma,M}$-strategy.
We will say that \emph{Presenter uses strategy $S$ in the interval $\sbrac{x,x+M}$} to denote that Presenter plays according to $S$, presenting intervals shifted by $x$, until Algorithm uses $C$ colors.

\subsection{Warm-up}\label{sec:lower_warmup}

Our first construction is a natural generalization of the strategy for unit 
intervals given by Epstein and Levy~\cite{EpsteinL05}, already described in Section~\ref{sec:reccomp} in an informal way.
The construction is surpassed by more involved strategies coming later, but it serves as a gentle introduction to our formal framework.


\begin{lemma}\label{lem:lower_32}
If there is an $\tbrac{\alpha,\sigma,M}$-schema, then there is a $\tbrac{2-\frac{1}{\alpha+1}, M+\epsi, M+1+\epsi}$-schema for every $\epsi > 0$.
\end{lemma}

\begin{proof}
To prove the lemma we need to provide an $\tbrac{\omega,(2-\frac{1}{\alpha+1})\omega-\oh{\omega}, M+\epsi, M+1+\epsi}$-strategy for every $\omega \in \PosNat$.
Let us fix an $\omega \in \PosNat$, and let $\omega' = \floor{\frac{\omega}{\alpha+1}}$.
The $\tbrac{\alpha,\sigma,M}$-schema contains an $\tbrac{\omega',\alpha\omega' - \delta, \sigma, M}$-strategy $S$ for some $\delta = \oh{\omega'}$.
The strategy for Presenter consists of three phases (see Figure~\ref{fig:lower_32}).
In the first phase, called the \emph{initial phase}, Presenter uses strategy $S$ inside the interval $\sbrac{1+\epsi, M+1+\epsi}$.
Let $C = \alpha \omega' - \delta$ and let $\mathcal{X}$ denote the set of $C$ colors used by Algorithm in the initial phase.

\begin{figure}
\centering
\begin{tikzpicture}
\tiny
\draw [<->] (-1em,0em) -- (-1em,-15em); \node at (-1em,-7.5em) [anchor=east] {$\omega$};
\node (split1) at (0em,0em) [
    anchor=north west,
    rectangle,
    draw,
    dashed,
    align=left,
    minimum width=12em,
    minimum height=8em,
]{
    \emph{separation phase}\\
    width: $\omega-\omega'$
};
\draw [<->] (13em, 1em) -- (29em,1em); \node at (21em, 1em) [anchor=south] {$M+\epsi$};
\node (final) at (13em,0em)[
    anchor=north west,
    rectangle,
    draw,
    dashed,
    align=left,
    minimum width=16em,
    minimum height=8em,
]{
    \emph{final phase}\\
    width: $\omega-\omega'$\\
    colors: $\norm{\mathcal{Z}} = \omega-\omega'$
};
\node (split2) at (3em,-9em)[
    anchor=north west,
    rectangle,
    draw,
    dashed,
    align=left,
    minimum width=12em,
    minimum height=6em,
]{
    \emph{separation phase}\\
    width: $\omega'$\\
    colors: $\norm{\mathcal{Y}}=\omega'$
};
\draw [<->] (30em, -9em) -- (30em,-15em); \node at (30em, -12em) [anchor=west] {$\omega'$};
\node (init) at (16em, -9em)[
    anchor=north west,
    rectangle,
    draw,
    dashed,
    align=left,
    minimum width=13em,
    minimum height=6em,
]{
    \emph{initial phase}\\
    width: $\omega'$\\
    colors: $\norm{\mathcal{X}}=\alpha\omega'-\delta$
};
\draw[<->] (16em, -16em) -- (29em,-16em); \node at (22.5em,-16em) [anchor=north] {$M$};
\draw[<->] (0em, -16em) -- (15em,-16em); \node at (7.5em,-16em) [anchor=north] {$1+\frac{\epsi}{2}$};
\draw[<->] (0em, -18em) -- (29em,-18em); \node at (14.5em,-18em) [anchor=north] {$M+1+\epsi$};
\end{tikzpicture}
\caption{Strategy construction in Lemma~\ref{lem:lower_32}}
\label{fig:lower_32}
\end{figure}

The second phase, borrowed from~\cite{EpsteinL05,Azar06}, is called the \emph{separation phase}.
In this phase, Presenter plays the following separation strategy for $\omega$ rounds.
Let $l_1 = 0$ and $r_1 = \frac{\epsi}{2}$.
In the $i$-th round of the separation phase Presenter introduces the interval $[\frac{l_i+r_i}{2}, \frac{l_i+r_i}{2}+1]$.
If Algorithm colors the interval with one of the colors in $\mathcal{X}$, let $l_{i+1} = \frac{l_i+r_i}{2}$ and $r_{i+1} = r_i$, which means that the next interval will be shifted slightly to the right.
Otherwise, let $l_{i+1} = l_i$ and $r_{i+1} = \frac{l_i+r_i}{2}$, which means that the next interval will be shifted slightly to the left.

The above procedure guarantees the following invariant. At the beginning of round $i$ all the previously introduced intervals with a color in $\mathcal{X}$ have their left endpoints to the left of $l_i$, and, conversely, all the previously introduced intervals with a color not in $\mathcal{X}$ have their left endpoints to the right of $r_i$. Moreover, all the intervals yet to be introduced will have their left endpoints strictly between $l_i$ and $r_i$.

Observe that all intervals introduced in the separation phase have length $1$ and $\forall_i \frac{l_i+r_i}{2} < \frac{\epsi}{2}$.
Thus, every interval introduced in the separation phase is contained in $\left[0,1+\frac{\epsi}{2}\right]$ and any two of those intervals intersect.
Furthermore, the above invariant guarantees that for any two intervals $x$, $y$ introduced in the separation phase, $x$ colored with a color in $\mathcal{X}$, and $y$ colored with a color not in $\mathcal{X}$, we have that the left endpoint of $x$ is to the left of the left endpoint of $y$.
Let $Y$ be the set of $\omega' \leq \omega - |\mathcal{X}|$ right-most intervals introduced in the separation phase, and let $\mathcal{Y}$ be the set of colors used by Algorithm on the intervals in $Y$. Note that $\mathcal{X}$ and $\mathcal{Y}$ are disjoint.

For the last phase, called the \emph{final phase}, let $r$ be the left-most right endpoint of an interval in $Y$.
In the final phase Presenter introduces $\omega - \omega'$ times the same interval $\sbrac{r, M+1+\epsi}$.
This interval intersects all intervals introduced in the initial phase, all intervals in $Y$, and no other interval introduced in the separation phase.
Thus, Algorithm must use $\omega - \omega'$ colors in the final phase that are different from the colors in both $\mathcal{X}$ and $\mathcal{Y}$.
Let $\mathcal{Z}$ denote the set of colors used by Algorithm in the final phase.

The presented set of intervals is clearly $\omega$-colorable and Algorithm used at least $\norm{\mathcal{X}}+\norm{\mathcal{Y}}+\norm{\mathcal{Z}} = \alpha\omega'-\delta+\omega'+\omega-\omega' = \brac{2-\frac{1}{\alpha+1}}\omega - \oh{\omega}$ many colors.
The longest interval presented has length $M+\epsi$, and all intervals are contained in $\sbrac{0,M+1+\epsi}$.
Thus, we have constructed a $\tbrac{2-\frac{1}{\alpha+1}, M+\epsi, M+1+\epsi}$-schema.
\end{proof}

\subsection{The \texorpdfstring{$5/3$}{5/3} Lower Bound}\label{sec:lower_53}


\begin{lemma}\label{lem:lower_53}
If there is an $\tbrac{\alpha,\sigma,M}$-schema, then there is a $\tbrac{2-\frac{1}{\alpha+2}, M+\epsi, M+2+\epsi}$-schema for every $\epsi > 0$.
\end{lemma}

\begin{proof}
The proof of this lemma is very similar to the proof of Lemma~\ref{lem:lower_32}, but now we have two separation phases instead of just one, see Figure~\ref{fig:lower_53}.
Let us fix an $\omega \in \PosNat$, and let $\omega' = \floor{\frac{\omega}{\alpha+2}}$.
Let $S$ be an $\tbrac{ \omega',\alpha\omega' - \delta, \sigma, M}$-strategy for some $\delta = \oh{\omega'}$.

\begin{figure}
\centering
\begin{tikzpicture}
\tiny
\draw [<->] (-1em,0em) -- (-1em,-15em); \node at (-1em,-7.5em) [anchor=east] {$\omega$};
\node (splita1) at (0em,0em) [
    anchor=north west,
    rectangle,
    draw,
    dashed,
    align=left,
    minimum width=10em,
    minimum height=8em,
]{
    \emph{separation phase}\\
    width: $\omega-\omega'$
};
\draw [<->] (11em, 1em) -- (25em,1em); \node at (18em, 1em) [anchor=south] {$M+\epsi'$};
\node (final) at (11em,0em)[
    anchor=north west,
    rectangle,
    draw,
    dashed,
    align=left,
    minimum width=14em,
    minimum height=8em,
]{
    \emph{final phase}\\
    width: $\omega-\omega'$\\
    colors: $\norm{\mathcal{Z}} = \omega-\omega'$
};
\node (splitb1) at (26em,0em) [
    anchor=north west,
    rectangle,
    draw,
    dashed,
    align=left,
    minimum width=10em,
    minimum height=8em,
]{
    \emph{separation phase}\\
    width: $\omega-\omega'$
};
\node (splita2) at (2em,-9em)[
    anchor=north west,
    rectangle,
    draw,
    dashed,
    align=left,
    minimum width=10em,
    minimum height=6em,
]{
    \emph{separation phase}\\
    width: $\omega'$\\
    colors: $\norm{\mathcal{Y}_1}=\omega'$
};
\node (init) at (13em, -9em)[
    anchor=north west,
    rectangle,
    draw,
    dashed,
    align=left,
    minimum width=10em,
    minimum height=6em,
]{
    \emph{initial phase}\\
    width: $\omega'$\\
    colors: $\norm{\mathcal{X}}=\alpha\omega'-\delta$
};
\draw [<->] (37em, -9em) -- (37em,-15em); \node at (37em, -12em) [anchor=west] {$\omega'$};
\node (splitb2) at (24em,-9em)[
    anchor=north west,
    rectangle,
    draw,
    dashed,
    align=left,
    minimum width=10em,
    minimum height=6em,
]{
    \emph{separation phase}\\
    width: $\omega'$\\
    colors: $\norm{\mathcal{Y}_2}=\omega'$
};

\draw[<->] (13em, -16em) -- (23em,-16em); \node at (18em,-16em) [anchor=north] {$M$};
\draw[<->] (0em, -16em) -- (12em,-16em); \node at (6em,-16em) [anchor=north] {$1+\frac{\epsi}{4}$};
\draw[<->] (24em, -16em) -- (36em,-16em); \node at (30em,-16em) [anchor=north] {$1+\frac{\epsi}{4}$};
\draw[<->] (0em, -18em) -- (36em,-18em); \node at (18em,-18em) [anchor=north] {$M+2+\epsi$};
\end{tikzpicture}
\caption{Strategy construction in Lemma~\ref{lem:lower_53}}
\label{fig:lower_53}
\end{figure}

In the initial phase, Presenter uses $S$ inside interval
$\sbrac{1+\frac{\epsi}{2}, M+1+\frac{\epsi}{2}}$, and forces Algorithm to use
$C=\alpha\omega'-\delta$ colors. Let $\mathcal{X}$ denote the set of those colors.

In the separation phase, Presenter plays the separation strategy two times.
First, Presenter plays the separation strategy for $\omega$ rounds in the region $\sbrac{0,1+\frac{\epsi}{4}}$ pushing to the right colors not in $\mathcal{X}$.
Let $Y_1$ be the set of $\omega'$ right-most intervals from this first separation.
Let $\mathcal{Y}_1$ denote the set of colors used by Algorithm to color $Y_1$.
Then, Presenter plays the separation strategy for $\omega$ rounds in the region $\sbrac{M+1+\frac{3\epsi}{4},M+2+\epsi}$ pushing to the left colors not in $\mathcal{X}\cup\mathcal{Y}_1$.
Let $Y_2$ be the set of $\omega'$ left-most intervals from this second separation.
Let $\mathcal{Y}_2$ denote the set of colors used by Algorithm to color $Y_2$.

Let $r$ be the left-most right endpoint of an interval in $Y_1$.
Let $l$ be the right-most left endpoint of an interval in $Y_2$.
In the final phase Presenter introduces $\omega - \omega'$ times the same interval $\sbrac{r, l}$.

The presented set of intervals is clearly $\omega$-colorable and Algorithm used at least $\norm{\mathcal{X}}+\norm{\mathcal{Y}_1}+\norm{\mathcal{Y}_2}+\norm{\mathcal{Z}} = \alpha\omega'-\delta+\omega'+\omega'+\omega-\omega' = \brac{2-\frac{1}{\alpha+2}}\omega - \oh{\omega}$ many colors.
The longest interval presented has length at most $M+\epsi$, and all intervals are contained in $\sbrac{0,M+2+\epsi}$.
Thus, we have constructed a $\tbrac{2-\frac{1}{\alpha+2}, M+\epsi, M+2+\epsi}$-schema.
\end{proof}

\begin{corollary}\label{cor:lower_53}
There is an $\tbrac{\alpha_{n}, 2n-1+\epsi, 2n+1+\epsi}$-schema,
for every $n \in \PosNat$ and every $\epsi > 0$, where
\[
\alpha_{n} = \frac
  {\brac{\sqrt{3}-3}\brac{\sqrt{3}-2}^{n}+\brac{\sqrt{3}+3}\brac{-\sqrt{3}-2}^{n}} 
  {\brac{\sqrt{3}-1}\brac{\sqrt{3}-2}^{n}+\brac{\sqrt{3}+1}\brac{-\sqrt{3}-2}^{n}}
\text{.}\]
\end{corollary}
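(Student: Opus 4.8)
The plan is to iterate Lemma~\ref{lem:lower_53} starting from the $\tbrac{1,1,1}$-schema, exactly as Corollary~\ref{cor:lower_32} iterated Lemma~\ref{lem:lower_32}, and then to solve the resulting linear recurrence in closed form. Concretely, Lemma~\ref{lem:lower_53} transforms an $\tbrac{\alpha,\sigma,M}$-schema into a $\tbrac{2-\frac{1}{\alpha+2}, M+\epsi, M+2+\epsi}$-schema. So if we define $\alpha_0 = \sigma_0 = M_0 = 1$ and $\alpha_{n+1} = 2-\frac{1}{\alpha_n+2}$, $\sigma_{n+1} = M_n$, $M_{n+1} = M_n+2$, then $n$ applications of the lemma (with the $\epsi$-values for each application scaled down, say to $\epsi/n$, so their total contribution stays below the target $\epsi$, just as in the footnote of Corollary~\ref{cor:lower_32}) yield an $\tbrac{\alpha_n, \sigma_n+\epsi, M_n+\epsi}$-schema. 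The length bookkeeping is immediate: $M_n = 2n+1$ and hence $\sigma_n = M_{n-1} = 2n-1$, which matches the claimed parameters $\tbrac{\alpha_n, 2n-1+\epsi, 2n+1+\epsi}$.

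It remains to check that the $\alpha_n$ produced by the recurrence $\alpha_{n+1} = 2 - \frac{1}{\alpha_n+2}$ with $\alpha_0 = 1$ equals the stated closed form. The standard trick is to linearize the Möbius (fractional-linear) recurrence: write $\alpha_n = \frac{p_n}{q_n}$, so that the recurrence becomes
$$
\frac{p_{n+1}}{q_{n+1}} = \frac{2(p_n+2q_n)-q_n}{p_n+2q_n} = \frac{2p_n+3q_n}{p_n+2q_n},
$$
i.e. the vector $\binom{p_{n+1}}{q_{n+1}}$ is obtained from $\binom{p_n}{q_n}$ by multiplying by the matrix $\left(\begin{smallmatrix}2&3\\1&2\end{smallmatrix}\right)$, with initial vector $\binom{p_0}{q_0}=\binom{1}{1}$. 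The eigenvalues of this matrix are $2\pm\sqrt3$, with corresponding eigenvectors $\binom{\sqrt3}{1}$ and $\binom{-\sqrt3}{1}$. Diagonalizing and expanding $\binom{1}{1}$ in the eigenbasis gives $p_n = a(\sqrt3)(2+\sqrt3)^n + b(-\sqrt3)(2-\sqrt3)^n$ and $q_n = a(2+\sqrt3)^n + b(2-\sqrt3)^n$ for the appropriate constants $a,b$ fixed by $n=0$; collecting terms and normalizing reproduces exactly the displayed formula for $\alpha_n$ (after rewriting $(2+\sqrt3)^n$ as $(-1)^n(-\sqrt3-2)^n$ and $(2-\sqrt3)^n$ as $(\sqrt3-2)^n(-1)^{-n}$, or absorbing the signs as the paper does). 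One should also note in passing that $2+\sqrt3 = 1/(2-\sqrt3)$, which is why the ratio has a well-defined limit $\sqrt3 \approx 1.732$ as $n\to\infty$, consistent with the fixed point of $x = 2-\frac{1}{x+2}$.

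The only genuinely nontrivial part is verifying the closed form, and even that is routine linear algebra — the main thing to be careful about is matching the paper's particular sign conventions in the expression (the factors $(\sqrt3-2)^n$ versus $(-\sqrt3-2)^n$), which is purely a cosmetic rearrangement of the eigenvalue expansion and can be confirmed by checking $n=1$ against $\alpha_1 = 2-\frac13 = \frac53$. Everything else — the $\epsi$ rescaling across iterations, the fact that each intermediate object is a legitimate schema, the length recurrences — is handled verbatim by the argument in Corollary~\ref{cor:lower_32}, so no new ideas are needed.
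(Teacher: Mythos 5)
Your proposal is correct and follows essentially the same route as the paper: iterate Lemma~\ref{lem:lower_53} starting from the $\tbrac{1,1,1}$-schema with the $\epsi$-rescaling trick of Corollary~\ref{cor:lower_32}, track $\sigma_{n+1}=M_n$, $M_{n+1}=M_n+2$, and solve the recurrence $\alpha_0=1$, $\alpha_{n+1}=2-\frac{1}{\alpha_n+2}$. The paper leaves the recurrence-solving implicit, whereas you carry out the linearization via the matrix $\bigl(\begin{smallmatrix}2&3\\1&2\end{smallmatrix}\bigr)$ explicitly, and your eigenvalue computation and sign bookkeeping check out (e.g.\ $\alpha_1=5/3$).
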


\begin{proof}

Starting with a $\tbrac{1,1,1}$-schema and repeatedly applying Lemma~\ref{lem:lower_53} one can generate\footnote{Knowing the desired target values of $n$ and $\epsilon$, one needs to properly adjust the $\epsilon$ value for each application of Lemma~\ref{lem:lower_53}, e.g., it is sufficient to set it to $\epsilon/n$.}
a family of $\tbrac{\alpha_n,\sigma_n+\epsi_n,M_n+\epsi_n}$-schemas, such that
$\alpha_{n+1} = 2 - \frac{1}{\alpha_{n}+2}$,
$\sigma_{n+1} = M_{n}$,
$M_{n+1} = M_{n} + 2$,
and $\alpha_0 = \sigma_0 = M_0 = 1$.
Solving the recurrence equations we get
$\alpha_{n} = \frac{F_{2n+1}}{F_{2n}}$,
$\sigma_{n} = 2n-1$,
and $M_{n} = 2n+1$.
\end{proof}

Note that, similarly to Observation~\ref{obs:phi}, one could already use
Corollary~\ref{cor:lower_53} to get a lower bound arbitrarily close to
$\lim_{n\to\infty}\alpha_n=\sqrt{3} \approx 1.73205$ for the asymptotic competitive ratio of any
online algorithm that work for all $\sigma \ge 1$.
Nonetheless in Section~\ref{sec:lower_52} we prove a stronger $5/2$ lower bound.

\FiveThirdsThm*

\begin{proof}
Assume for contradiction that for some $\sigma > 1$ there exists an online algorithm for $\sigma$-interval coloring with the asymptotic competitive ratio $\frac{5}{3}-\epsi$, for some $\epsi > 0$. By the definition of the asymptotic competitive ratio, there is an $\omega_A$ such that for every $\omega \geq \omega_A$ the algorithm colors every $\omega$-colorable set of intervals using at most $\brac{\frac{5}{3}-\epsi+\frac{\epsi}{3}}\omega = \brac{\frac{5}{3}-\frac{2\epsi}{3}}\omega$ colors.

Observe that, for $n=1$, Corollary~\ref{cor:lower_53} gives a $\tbrac{\frac{5}{3},1+(\sigma-1),3+(\sigma-1)}$-schema.
By the definition of schema, there is an $\omega_P$ such that for every $\omega \geq \omega_P$ there is a strategy for Presenter to present an $\omega$-colorable set of intervals, of length in $\sbrac{1,\sigma}$, and force Algorithm to use $\brac{\frac{5}{3}-\frac{\epsi}{3}}\omega$ colors. For $\omega=\max(\omega_A,\omega_P)$ we reach a contradiction.
\end{proof}

\subsection{The \texorpdfstring{$7/4$}{7/4} Lower Bound}\label{sec:lower_74}


\begin{lemma}\label{lem:lower_74}
If there is an $\tbrac{\alpha,\sigma,M}$-schema, then there is a $\tbrac{\frac{2\alpha+1}{2\alpha+2}, 2M+\epsi, 2M+2+\epsi}$-schema for every $\epsi > 0$.
\end{lemma}

\begin{proof}
The proof of this lemma is a bit more complicated than the previous ones, as we now have two initial phases, two separation phases and a strategy branching, see Figure~\ref{fig:lower_74_case1} and Figure~\ref{fig:lower_74_case2}.
Let us fix an $\omega \in \PosNat$, and let $\omega' = \floor{\frac{\omega}{\alpha+1}}$.
Let $S$ be an $\tbrac{\omega',\alpha\omega' - \delta, \sigma, M}$-strategy for some $\delta = \oh{\omega'}$.

In the initial phase, Presenter uses strategy $S$ twice: (1) inside interval $\sbrac{1+\frac{\epsi}{3},M+1+\frac{\epsi}{3}}$, and (2) inside interval $\sbrac{M+1+\frac{2\epsi}{3}, 2M+1+\frac{2\epsi}{3}}$.
Algorithm uses $C=\alpha\omega'-\delta$ colors in each of these intervals.
We get a set of colors $\mathcal{X}_1$ used by Algorithm in the first interval, and a set of colors $\mathcal{X}_2$ used by Algorithm in the second interval.
Note that $\mathcal{X}_1\cap\mathcal{X}_2$ might be non-empty.

In the separation phase, Presenter plays the separation strategy two times.
First, Presenter plays the separation strategy for $\omega$ rounds in the region $\sbrac{0,1+\frac{\epsi}{6}}$ pushing to the right colors not in $\mathcal{X}_1$.
Let $Y_1$ be the set of $\omega'$ right-most intervals from the first separation phase.
Let $\mathcal{Y}_1$ denote the set of colors used by Algorithm to color $Y_1$.
Then, Presenter plays the separation strategy for $\omega$ rounds in the region $\sbrac{2M+1+\frac{5\epsi}{6},2M+2+\epsi}$ pushing to the left colors not in $\mathcal{X}_2$.
Let $Y_2$ be the set of $\omega'$ left-most intervals from the second separation phase.
Let $\mathcal{Y}_2$ denote the set of colors used by Algorithm to color $Y_2$.
Let $r$ be the left-most right endpoint of an interval in $Y_1$.
Let $l$ be the right-most left endpoint of an interval in $Y_2$.

There are two cases in the final phase.
Let $\mathcal{C}_{1} := \mathcal{X}_{1} \cup \mathcal{Y}_{1}$, and analogously $\mathcal{C}_{2} := \mathcal{X}_{2} \cup \mathcal{Y}_{2}$.
    We have that $\norm{\mathcal{C}_1} = \norm{\mathcal{C}_2} = \brac{\alpha+1}\omega' - \delta = \omega - \oh{\omega}$.

\begin{figure}
\centering
\begin{tikzpicture}
\tiny
\draw [<->] (-1em,0em) -- (-1em,-15em); \node at (-1em,-7.5em) [anchor=east] {$\omega$};
\node (splita1) at (0em,0em) [
    anchor=north west,
    rectangle,
    draw,
    dashed,
    align=left,
    minimum width=10em,
    minimum height=8em,
]{
    \emph{separation phase}\\
    width: $\omega-\omega'$
};
\draw [<->] (11em, 1em) -- (38em,1em); \node at (24.5em, 1em) [anchor=south] {$2M+\epsi'$};
\node (final) at (11em,0em)[
    anchor=north west,
    rectangle,
    draw,
    dashed,
    align=left,
    minimum width=27em,
    minimum height=8em,
]{
    \emph{final phase}\\
    width: $\omega-\omega'$\\
    colors: $\norm{\mathcal{Z}} = \omega-\omega'$
};
\node (splitb1) at (39em,0em) [
    anchor=north west,
    rectangle,
    draw,
    dashed,
    align=left,
    minimum width=10em,
    minimum height=8em,
]{
    \emph{separation phase}\\
    width: $\omega-\omega'$
};
\node (splita2) at (2em,-9em)[
    anchor=north west,
    rectangle,
    draw,
    dashed,
    align=left,
    minimum width=10em,
    minimum height=6em,
]{
    \emph{separation phase}\\
    width: $\omega'$\\
    colors: $\norm{\mathcal{Y}_1}=\omega'$
};
\node (init1) at (13em, -9em)[
    anchor=north west,
    rectangle,
    draw,
    dashed,
    align=left,
    minimum width=10em,
    minimum height=6em,
]{
    \emph{initial phase}\\
    width: $\omega'$\\
    colors:\\$\norm{\mathcal{X}_1}=\alpha\omega'-\delta$
};
\node (init2) at (26em, -9em)[
    anchor=north west,
    rectangle,
    draw,
    dashed,
    align=left,
    minimum width=10em,
    minimum height=6em,
]{
    \emph{initial phase}\\
    width: $\omega'$\\
    colors:\\$\norm{\mathcal{X}_2}=\alpha\omega'-\delta$
};
\draw [<->] (50em, -9em) -- (50em,-15em); \node at (50em, -12em) [anchor=west] {$\omega'$};

\node (splitb2) at (37em,-9em)[
    anchor=north west,
    rectangle,
    draw,
    dashed,
    align=left,
    minimum width=10em,
    minimum height=6em,
]{
    \emph{separation phase}\\
    width: $\omega'$\\
    colors: $\norm{\mathcal{Y}_2}=\omega'$
};

\draw[<->] (13em, -16em) -- (23em,-16em); \node at (18em,-16em) [anchor=north] {$M$};
\draw[<->] (26em, -16em) -- (36em,-16em); \node at (31em,-16em) [anchor=north] {$M$};
\draw[<->] (0em, -16em) -- (12em,-16em); \node at (6em,-16em) [anchor=north] {$1+\frac{\epsi}{6}$};
\draw[<->] (37em, -16em) -- (49em,-16em); \node at (43em,-16em) [anchor=north] {$1+\frac{\epsi}{6}$};
\draw[<->] (0em, -18em) -- (49em,-18em); \node at (24.5em,-18em) [anchor=north] {$2M+2+\epsi$};
\end{tikzpicture}
\caption{Lemma~\ref{lem:lower_74}, Case~1: $\norm{\mathcal{C}_{2} \setminus \mathcal{C}_{1}} \geq \frac{\omega}{2\alpha+2}$}
\label{fig:lower_74_case1}
\end{figure}

\subsubsection*{Case 1.}
If $\norm{\mathcal{C}_{2} \setminus \mathcal{C}_{1}} \geq \frac{\omega}{2\alpha+2}$,
then Presenter introduces $\omega - \omega'$ times the same interval $\sbrac{r, l}$.

Each interval introduced in the final phase intersects with all intervals from both initial phases and all intervals in $Y_1 \cup Y_2$.
Thus, Algorithm is forced to use $\norm{\mathcal{C}_{1} \cup \mathcal{C}_{2}} + \omega - \omega' = \norm{\mathcal{C}_{1}} + \norm{\mathcal{C}_{2} \setminus \mathcal{C}_{1}} + \omega-\omega' \geq \omega -\oh{\omega} + \frac{\alpha+\frac{1}{2}}{\alpha+1}\omega = \brac{2-\frac{1}{2\alpha+2}}\omega -\oh{\omega}$ colors in total.

\begin{figure}
\centering
\begin{tikzpicture}
\tiny
\draw [<->] (-1em,0em) -- (-1em,-15em); \node at (-1em,-7.5em) [anchor=east] {$\omega$};
\node (splita1) at (0em,0em) [
    anchor=north west,
    rectangle,
    draw,
    dashed,
    align=left,
    minimum width=10em,
    minimum height=8em,
]{
    \emph{separation phase}\\
    width: $\omega-\omega'$
};
\draw [<->] (11em, 1em) -- (38em,1em); \node at (24.5em, 1em) [anchor=south] {$2M+\epsi'$};
\node (prefinal) at (24em,-4.5em)[
    anchor=north west,
    rectangle,
    draw,
    dashed,
    align=left,
    minimum width=14em,
    minimum height=3.5em,
]{
    \emph{pre-final phase}\\
    width: $\omega'$
};
\node (final) at (11em,-0em)[
    anchor=north west,
    rectangle,
    draw,
    dashed,
    align=left,
    minimum width=14em,
    minimum height=3.5em,
]{
    \emph{final phase}\\
    width: $\omega-\omega'$
};

\node (splitb1) at (39em,0em) [
    anchor=north west,
    rectangle,
    draw,
    dashed,
    align=left,
    minimum width=10em,
    minimum height=8em,
]{
    \emph{separation phase}\\
    width: $\omega-\omega'$
};
\node (splita2) at (2em,-9em)[
    anchor=north west,
    rectangle,
    draw,
    dashed,
    align=left,
    minimum width=10em,
    minimum height=6em,
]{
    \emph{separation phase}\\
    width: $\omega'$\\
    colors: $\norm{\mathcal{Y}_1}=\omega'$
};
\node (init1) at (13em, -9em)[
    anchor=north west,
    rectangle,
    draw,
    dashed,
    align=left,
    minimum width=10em,
    minimum height=6em,
]{
    \emph{initial phase}\\
    width: $\omega'$\\
    colors:\\$\norm{\mathcal{X}_1}=\alpha\omega'-\delta$
};
\node (init2) at (26em, -9em)[
    anchor=north west,
    rectangle,
    draw,
    dashed,
    align=left,
    minimum width=10em,
    minimum height=6em,
]{
    \emph{initial phase}\\
    width: $\omega'$\\
    colors:\\$\norm{\mathcal{X}_2}=\alpha\omega'-\delta$
};
\draw [<->] (50em, -9em) -- (50em,-15em); \node at (50em, -12em) [anchor=west] {$\omega'$};

\node (splitb2) at (37em,-9em)[
    anchor=north west,
    rectangle,
    draw,
    dashed,
    align=left,
    minimum width=10em,
    minimum height=6em,
]{
    \emph{separation phase}\\
    width: $\omega'$\\
    colors: $\norm{\mathcal{Y}_2}=\omega'$
};

\draw[<->] (13em, -16em) -- (23em,-16em); \node at (18em,-16em) [anchor=north] {$M$};
\draw[<->] (26em, -16em) -- (36em,-16em); \node at (31em,-16em) [anchor=north] {$M$};
\draw[<->] (0em, -16em) -- (12em,-16em); \node at (6em,-16em) [anchor=north] {$1+\frac{\epsi}{6}$};
\draw[<->] (37em, -16em) -- (49em,-16em); \node at (43em,-16em) [anchor=north] {$1+\frac{\epsi}{6}$};
\draw[<->] (0em, -18em) -- (49em,-18em); \node at (24.5em,-18em) [anchor=north] {$2M+2+\epsi$};
\end{tikzpicture}
\caption{Lemma~\ref{lem:lower_74}, Case~2: $\norm{\mathcal{C}_{2} \setminus \mathcal{C}_{1}} < \frac{\omega}{2\alpha+2}$}
\label{fig:lower_74_case2}
\end{figure}

\subsubsection*{Case 2.}
If $\norm{\mathcal{C}_{2} \setminus \mathcal{C}_{1}} < \frac{\omega}{2\alpha+2}$,
then Presenter introduces $\omega'$ intervals, all of them having endpoints $\sbrac{M+1+5\epsi/12,l}$.
Let $Q$ be the set of colors used by Algorithm in this \emph{pre-final phase}.
We have $\mathcal{C}_{2} \cap Q = \emptyset$,
and we assumed that $\norm{\mathcal{C}_{2} \setminus \mathcal{C}_{1}} \leq \frac{\omega}{2\alpha+2}$,
thus we have $\norm{Q \setminus \mathcal{C}_{1}} \geq \frac{\omega}{2\alpha+2}$,
and now we are in a situation analogous to Case 1, with $Q$ playing the role of $\mathcal{C}_{2}$, see Figure~\ref{fig:lower_74_case2}.

The longest interval introduced by Presenter in both cases has length strictly less than $2M+\epsi$, and the whole game is played in the region $\sbrac{0,2M+2+\epsi}$.
\end{proof}

\begin{corollary}\label{cor:lower_74}
There is an $\tbrac{\alpha_{n}, 3\cdot2^{n}-4+\epsi, 3\cdot2^{n}-2+\epsi}$-schema,
for every $n \in \PosNat$ and every $\epsi > 0$, where
\[
    \alpha_{n} = \frac
  {\brac{\sqrt{7}-4}\brac{\sqrt{7}-3}^{n}+\brac{\sqrt{7}+4}\brac{-\sqrt{7}-3}^{n}} 
  {\brac{\sqrt{7}-1}\brac{\sqrt{7}-3}^{n}+\brac{\sqrt{7}+1}\brac{-\sqrt{7}-3}^{n}}
\text{.}\]
\end{corollary}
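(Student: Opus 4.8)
The plan is to follow exactly the pattern of Corollaries~\ref{cor:lower_32} and~\ref{cor:lower_53}: start from the $\tbrac{1,1,1}$-schema of the Example and apply Lemma~\ref{lem:lower_74} $n$ times in succession. As explained in the footnote to Corollary~\ref{cor:lower_32}, to reach prescribed target values of $n$ and $\epsi$ one first splits the error budget, invoking the lemma with parameter $\epsi/n$ at each of the $n$ steps, so that the accumulated additive overhead on both the maximum interval length and on the width of the containing window is at most $\epsi$. This produces a family of $\tbrac{\alpha_n,\sigma_n+\epsi,M_n+\epsi}$-schemas governed by the recurrences $\alpha_{n+1} = 2-\frac{1}{2\alpha_n+2}$, $\sigma_{n+1} = 2M_n$, and $M_{n+1} = 2M_n+2$, with $\alpha_0 = \sigma_0 = M_0 = 1$.

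Next I would solve the two length recurrences, which are linear. From $M_{n+1} = 2M_n+2$ and $M_0 = 1$ one gets $M_n = 3\cdot 2^n - 2$, and hence $\sigma_n = 2M_{n-1} = 3\cdot 2^n - 4$; these are exactly the parameters claimed in the statement (for $n\ge 1$).

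It remains to solve the Möbius recurrence for $\alpha_n$. Rewriting it as $\alpha_{n+1} = \frac{4\alpha_n+3}{2\alpha_n+2}$ and substituting $\alpha_n = p_n/q_n$ linearizes it to $p_{n+1} = 4p_n+3q_n$, $q_{n+1} = 2p_n+2q_n$ with $(p_0,q_0)=(1,1)$. The associated $2\times 2$ matrix has characteristic polynomial $\lambda^2-6\lambda+2$, with eigenvalues $3\pm\sqrt{7}$; diagonalizing and expressing $p_n$ and $q_n$ as linear combinations of $(3-\sqrt7)^n$ and $(3+\sqrt7)^n$, then taking the quotient, gives $\alpha_n$ in closed form. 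The expression displayed in the statement is this quotient rewritten using $(\sqrt7-3)^n = (-1)^n(3-\sqrt7)^n$ and $(-\sqrt7-3)^n = (-1)^n(3+\sqrt7)^n$, the common factor $(-1)^n$ cancelling between numerator and denominator. A quick consistency check is that the formula returns $\alpha_1 = 7/4$ (matching $2-\frac14$) and $\alpha_2 = 20/11$.

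All three steps are routine computations. The only points requiring a little care — exactly as in the analogous corollaries — are the $\epsi$-bookkeeping across the $n$ iterations and verifying that the diagonalization reproduces the precise expression displayed rather than merely an equivalent one; no idea beyond Lemma~\ref{lem:lower_74} is needed. As a sanity check on the final answer, note $\lim_{n\to\infty}\alpha_n$ is the positive fixed point of $x = 2-\frac{1}{2x+2}$, namely $\frac{1+\sqrt7}{2}\approx 1.82$, consistent with the $1.81$ entry of Table~\ref{tab:strategies}.
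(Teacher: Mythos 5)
Your proposal is correct and follows essentially the same route as the paper, which likewise obtains the schema by iterating Lemma~\ref{lem:lower_74} starting from the $\tbrac{1,1,1}$-schema and solving the recurrences $\alpha_{n+1}=2-\frac{1}{2\alpha_n+2}$, $M_{n+1}=2M_n+2$, $\sigma_{n+1}=2M_n$ (your eigenvalue computation, the closed form, and the checks $\alpha_1=7/4$, $\alpha_2=20/11$, $\lim\alpha_n=\frac{1+\sqrt7}{2}$ are all right). One small slip: since the window width is \emph{doubled} at each application of Lemma~\ref{lem:lower_74}, choosing $\epsi/n$ per step accumulates an error of $(2^n-1)\epsi/n$ rather than $\epsi$, so the per-step tolerances should decrease geometrically (e.g.\ $\epsi/2^{n}$) — a trivial adjustment of the bookkeeping.
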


\begin{proof}
The argument is similar to Corollary~\ref{cor:lower_53}, but now we solve the recurrence equations
$\alpha_{0} = 1$, $\alpha_{n+1} = 2 - \frac{1}{2\alpha_{n}+2}$,
and $M_{0} = 1$, $M_{n+1} = 2M_{n} + 2$, $\sigma_{0} = 1, \sigma_{n+1} = 2M_n$.
\end{proof}

Note that, similarly to Observation~\ref{obs:phi}, one could already use
Corollary~\ref{cor:lower_74} to get a lower bound arbitrarily close to
$\lim_{n\to\infty}\alpha_n=\frac{1+\sqrt{7}}{2} \approx 1.82287$ for the asymptotic competitive
ratio of any online algorithm that work for all $\sigma \ge 1$.
Nonetheless in Section~\ref{sec:lower_52} we prove a stronger $5/2$ lower bound.

\SevenFourthsThm*

\begin{proof}
Observe that, for $\sigma > 2$ and $n=1$, Corollary~\ref{cor:lower_74} gives a $\tbrac{\frac{7}{4},2+(\sigma-2),4+(\sigma-2)}$-schema.
Then proceed analogously to the proof of Theorem~\ref{thm:53}.
\end{proof}

\subsection{The \texorpdfstring{$5/2$}{5/2} Lower Bound}\label{sec:lower_52}

To prove our main negative result we need two simple combinatorial lemmas.


\begin{lemma}\label{lem:4sets}
Let $\gamma \in \sbrac{0,1}$. For every four sets $X_1,\ldots,X_4$,
each of size $k$, if their intersection is small:
$\norm{\bigcap_{i=1}^{4}X_i} \leq \brac{1-\gamma}\cdot k$,
their union is large: $\norm{\bigcup_{i=1}^{4}X_i} \geq \frac{3+\gamma}{3} \cdot k$.
\end{lemma}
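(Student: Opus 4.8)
The statement is a purely combinatorial inequality about four $k$-sets, so I would attack it by an inclusion–exclusion / counting argument based on how many of the four sets each element belongs to. For $t \in \{1,2,3,4\}$ let $a_t$ denote the number of elements that lie in exactly $t$ of the sets $X_1,\dots,X_4$. Then $\bigl|\bigcup_i X_i\bigr| = a_1+a_2+a_3+a_4$, while $\sum_{i=1}^4 |X_i| = a_1 + 2a_2 + 3a_3 + 4a_4 = 4k$, and $\bigl|\bigcap_i X_i\bigr| = a_4 \le (1-\gamma)k$. The goal is to show $a_1+a_2+a_3+a_4 \ge \frac{3+\gamma}{3}k$.

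The key step is to combine these linear relations to bound the union from below. From $\sum |X_i| = 4k$ we get $4k = a_1+2a_2+3a_3+4a_4 \le 3(a_1+a_2+a_3+a_4) + a_4$, since replacing the coefficients $1,2,3$ by $3$ only increases the right side (each $a_t \ge 0$) and $4a_4 = 3a_4 + a_4$. Hence $|\bigcup_i X_i| = a_1+a_2+a_3+a_4 \ge \frac{4k - a_4}{3} \ge \frac{4k - (1-\gamma)k}{3} = \frac{3+\gamma}{3}k$, which is exactly the claimed bound. So the whole proof is essentially this one chain of inequalities, plus the observation that the coefficient-$3$ bound is tight precisely when $a_1 = a_2 = 0$, i.e. every element outside the common intersection lies in exactly three of the sets — useful to record since later applications presumably want the extremal configuration.

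I do not expect any real obstacle here; the only thing to be careful about is getting the direction of the estimate right (we are bounding a weighted sum $a_1+2a_2+3a_3+4a_4$ from above by $3\cdot(\text{union}) + a_4$, not the other way around) and confirming the numbers match $\frac{3+\gamma}{3}$ rather than, say, $\frac{2+\gamma}{2}$ or some off-by-one variant. I would present it as follows.

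\begin{proof}
For $t \in \{1,2,3,4\}$, let $a_t$ be the number of elements of $X_1 \cup \cdots \cup X_4$ that belong to exactly $t$ of the sets $X_1,\ldots,X_4$. Counting incidences between elements and sets,
$$
4k = \sum_{i=1}^4 \norm{X_i} = a_1 + 2a_2 + 3a_3 + 4a_4 \le 3\brac{a_1 + a_2 + a_3 + a_4} + a_4\text{,}
$$
where the inequality holds because $a_1, a_2, a_3 \ge 0$. Since an element lies in all four sets iff it is counted in $a_4$, we have $a_4 = \norm{\bigcap_{i=1}^4 X_i} \le (1-\gamma)k$. Therefore
$$
\norm{\bigcup_{i=1}^4 X_i} = a_1 + a_2 + a_3 + a_4 \ge \frac{4k - a_4}{3} \ge \frac{4k - (1-\gamma)k}{3} = \frac{3+\gamma}{3}\cdot k\text{.}
$$
\end{proof}
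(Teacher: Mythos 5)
Your proof is correct and uses essentially the same argument as the paper: a double count of element--set incidences combined with the observation that every element outside the four-fold intersection lies in at most three of the sets, yielding $3\norm{\bigcup_i X_i} \geq 4k - \norm{\bigcap_i X_i} \geq (3+\gamma)k$. Your $a_t$ bookkeeping is just a more explicit rendering of the paper's one-line inequality, so there is nothing to flag.
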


\begin{proof}
Each element which belongs to the union but does not belong to the intersection can belong to at most three sets.
Thus, we have 
\[
    3\cdot\brac{\norm{\bigcup_{i=1}^{4}X_i} - \norm{\bigcap_{i=1}^{4}X_i}}
    \geq
    4\cdot\brac{k - \norm{\bigcap_{i=1}^{4}X_i}}
    \text{,}
\]
and so 
\[
    3\cdot\norm{\bigcup_{i=1}^{4}X_i}
    \geq
    4k-\norm{\bigcap_{i=1}^{4}X_i}
    \geq
    \brac{3+\gamma}\cdot k
    \text{.}
\]
\end{proof}


\begin{lemma}\label{lem:4split}
Let $\gamma \in \sbrac{0,1}$, and $X_1,\ldots,X_{4^n}$ be a family of $4^n$ sets, each of size $k$. Then, either 
\[\norm{\bigcup_{i=1}^{4^n}X_i} \geq \brac{\frac{3+\gamma}{3}}^{n}k\text{,}\]
or the sequence $1,2,\ldots,4^n$ can be covered with four disjoint intervals
$[l_1,r_1], \ldots, [l_4,r_4]$, $l_1=1$, $l_{i+1}=r_i+1$, $r_4=4^n$, such that
for $Y_i = \bigcup_{j=l_i}^{r_i}X_j$ the intersection of $Y_i$'s is large:
\[\norm{Y_1 \cap Y_2 \cap Y_3 \cap Y_4} \geq (1-\gamma)\cdot k\text{.}\]
\end{lemma}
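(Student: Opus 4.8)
The plan is to prove this by induction on $n$, using Lemma~\ref{lem:4sets} as the base-case-like engine at every level. For $n=1$ the statement is exactly Lemma~\ref{lem:4sets}: the four intervals are the four singletons $[i,i]$, so $Y_i = X_i$, and the dichotomy ``union is large, or intersection of the $Y_i$'s is at least $(1-\gamma)k$'' is precisely what Lemma~\ref{lem:4sets} gives (in contrapositive form). So assume the statement holds for $n$ and consider $4^{n+1}$ sets $X_1,\ldots,X_{4^{n+1}}$, each of size $k$.

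First I would split the index range $1,\ldots,4^{n+1}$ into four consecutive blocks of length $4^{n}$, and for each block form its union; call these four unions $Z_1,Z_2,Z_3,Z_4$. Each $Z_i$ has size at least $k$ (it contains $X_j$ for any $j$ in its block); if some $Z_i$ is strictly larger than $k$ we can, without loss of generality, pretend it has size exactly $k$ by discarding elements — shrinking the sets only makes both the ``union large'' and ``intersection large'' conclusions harder, so it is safe to assume $\norm{Z_i}=k$. Now apply Lemma~\ref{lem:4sets} to $Z_1,\ldots,Z_4$. Either $\norm{\bigcup_{i=1}^4 Z_i} \geq \frac{3+\gamma}{3}k$, or $\norm{\bigcap_{i=1}^4 Z_i} \geq (1-\gamma)k$. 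In the second case we are already done: take the four intervals to be the four blocks themselves, so $Y_i = Z_i$ and their intersection is large. In the first case, note $\bigcup_{i=1}^4 Z_i = \bigcup_{j=1}^{4^{n+1}} X_j$, so the total union already has size at least $\frac{3+\gamma}{3}k$ — but we need $\brac{\frac{3+\gamma}{3}}^{n+1}k$, so this is not yet enough, and here is where the recursion must be fed back in.

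The right way to set up the induction, therefore, is to apply the inductive hypothesis \emph{within} one of the blocks. More carefully: I would instead run the argument ``top down.'' Apply Lemma~\ref{lem:4sets} to the four block-unions $Z_1,\ldots,Z_4$ (after the harmless shrinking to size $k$). If their intersection is large, we finish as above with the four blocks as the covering intervals. Otherwise their union is at least $\frac{3+\gamma}{3}k$; but their union equals the union of \emph{all} $4^{n+1}$ sets, and we would like a factor $\brac{\frac{3+\gamma}{3}}^{n+1}$, not $\frac{3+\gamma}{3}$. To get the extra factor, apply the inductive hypothesis to the first block $X_1,\ldots,X_{4^n}$: either its union is at least $\brac{\frac{3+\gamma}{3}}^{n}k$, in which case — wait, that still only gives $\brac{\frac{3+\gamma}{3}}^n k$, not $\brac{\frac{3+\gamma}{3}}^{n+1}k$. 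So the clean inductive quantity must be normalized differently: one should track the \emph{ratio} $\norm{\bigcup X_j}/k$ and observe that the shrinking step is what lets the levels compose multiplicatively. Concretely, in the ``union large'' branch of Lemma~\ref{lem:4sets} at the top level we learn the four block-unions $Z_i$ (each of true size $\geq k$, pre-shrinking) have combined union $\geq \frac{3+\gamma}{3}\cdot k'$ where $k'$ is the common shrunk size; but before shrinking, at least one block-union $Z_i$ had size $\geq$ (something). The cleanest route: prove by induction the statement ``either $\norm{\bigcup_{i} X_i}\geq \brac{\frac{3+\gamma}{3}}^n k$, or the covering exists,'' and in the inductive step, if the top-level Lemma~\ref{lem:4sets} does not give a large intersection, then the union of the four block-unions is $\geq \frac{3+\gamma}{3}\cdot\min_i \norm{Z_i}$; recurse into each block via the inductive hypothesis — if \emph{every} block fails to produce a covering, then every block-union has size $\geq \brac{\frac{3+\gamma}{3}}^n k$, hence $\min_i\norm{Z_i}\geq\brac{\frac{3+\gamma}{3}}^n k$, hence the total union is $\geq \brac{\frac{3+\gamma}{3}}^{n+1}k$; and if some block \emph{does} produce a covering $[l_1,r_1],\ldots,[l_4,r_4]$ of its $4^n$ indices with large $Y_i$-intersection, translate that covering up to a covering of $1,\ldots,4^{n+1}$ by extending the outermost two intervals to absorb the other three blocks (they only grow the $Y_i$'s, preserving the intersection bound).

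The main obstacle is exactly this bookkeeping: making the four covering intervals line up correctly when the ``good'' covering is found deep inside one sub-block rather than at the top level, and checking that enlarging the end intervals $[l_1,r_1]$ and $[l_4,r_4]$ to swallow the remaining sub-blocks does not decrease $\norm{Y_1\cap Y_2\cap Y_3\cap Y_4}$ — which holds because $Y_i$ only grows as we add more $X_j$'s to it, and the middle two intervals are left untouched. The shrinking-to-size-$k$ reduction at each level is the other delicate point: one must be sure that replacing a set of size $>k$ by a size-$k$ subset is monotone for \emph{both} conclusions (it is: smaller sets make ``union $\geq \text{threshold}$'' harder and ``intersection $\geq\text{threshold}$'' harder, and the threshold in the union conclusion $\brac{\frac{3+\gamma}{3}}^n k$ is stated in terms of the original $k$, which we never shrink — only the intermediate $Z_i$'s are conceptually shrunk when applying Lemma~\ref{lem:4sets}, and that lemma's hypothesis and conclusion are both homogeneous of degree one in the common size). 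Once these two points are pinned down, the induction closes cleanly.
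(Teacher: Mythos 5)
Your proposal is correct and is essentially the paper's proof: both build the same $4$-ary tree of unions of consecutive blocks, apply Lemma~\ref{lem:4sets} at each internal node after normalizing the four child-unions to a common size (which is safe because shrinking only weakens both conclusions and the intersection threshold $(1-\gamma)k$ refers to the original $k$), and conclude either geometric growth of the union or a large intersection at some node, which extends to a covering of all of $1,\ldots,4^n$ by enlarging the two outer intervals. The only difference is presentational — you recurse top-down by induction on $n$, while the paper runs the induction bottom-up over the tree levels — and the details you flag as delicate (the monotonicity of the shrinking step and the extension of the outer intervals) are handled correctly.
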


\begin{proof}
Consider $n+1$ families of sets defined as follows:
$\mathcal{X}_{i}^{0} := X_i$ for every $i \in[4^n]$, and
$\mathcal{X}_{i}^{j} := \bigcup_{l=4i-3}^{4i}\mathcal{X}_{l}^{j-1}$
for every $j \in [n]$ and $i \in [4^{n-j}]$. See Figure~\ref{fig:4split}.

\begin{figure}
\centering
\begin{tikzpicture}
\tiny
    \node at (-0.5em,5.3em) [
        anchor=north west,rectangle,draw,dashed,align=left,
        minimum width=43em,
        minimum height=8.2em
    ]{};
    \node at (-0.2em,5em) [
        anchor=north west,rectangle,align=left,
        minimum width=42.4em,
        minimum height=2.4em
    ]{$\mathcal{X}_1^2$};
    \begin{scope}[shift={(0em,0em)}]
    \node at (-0.2em,2.7em) [
        anchor=north west,rectangle,draw,dashed,align=left,
        minimum width=10.3em,
        minimum height=5.3em
    ]{};
    \node (x1) at (0em,0em) [
        anchor=north west,rectangle,draw,align=left,
        minimum width=2.4em,
        minimum height=2.4em
    ]{$X_{1}$};
    \node (x2) at (2.5em,0em) [
        anchor=north west,rectangle,draw,align=left,
        minimum width=2.4em,
        minimum height=2.4em
    ]{$X_{2}$};
    \node (x3) at (5em,0em) [
        anchor=north west,rectangle,draw,align=left,
        minimum width=2.4em,
        minimum height=2.4em
    ]{$X_{3}$};
    \node (x4) at (7.5em,0em) [
        anchor=north west,rectangle,draw,align=left,
        minimum width=2.4em,
        minimum height=2.4em
    ]{$X_{4}$};
    \node at (0em,2.5em) [
        anchor=north west,rectangle,align=left,
        minimum width=9.9em,
        minimum height=2.4em
    ]{$\mathcal{X}_1^1$};
    \end{scope}
    \begin{scope}[shift={(10.7em,0em)}]
    \node at (-0.2em,2.7em) [
        anchor=north west,rectangle,draw,dashed,align=left,
        minimum width=10.3em,
        minimum height=5.3em
    ]{};
    \node (x1) at (0em,0em) [
        anchor=north west,rectangle,draw,align=left,
        minimum width=2.4em,
        minimum height=2.4em
    ]{$X_{5}$};
    \node (x2) at (2.5em,0em) [
        anchor=north west,rectangle,draw,align=left,
        minimum width=2.4em,
        minimum height=2.4em
    ]{$X_{6}$};
    \node (x3) at (5em,0em) [
        anchor=north west,rectangle,draw,align=left,
        minimum width=2.4em,
        minimum height=2.4em
    ]{$X_{7}$};
    \node (x4) at (7.5em,0em) [
        anchor=north west,rectangle,draw,align=left,
        minimum width=2.4em,
        minimum height=2.4em
    ]{$X_{8}$};
    \node at (0em,2.5em) [
        anchor=north west,rectangle,align=left,
        minimum width=9.9em,
        minimum height=2.4em
    ]{$\mathcal{X}_2^1$};
    \end{scope}
    \begin{scope}[shift={(21.4em,0em)}]
    \node at (-0.2em,2.7em) [
        anchor=north west,rectangle,draw,dashed,align=left,
        minimum width=10.3em,
        minimum height=5.3em
    ]{};
    \node (x1) at (0em,0em) [
        anchor=north west,rectangle,draw,align=left,
        minimum width=2.4em,
        minimum height=2.4em
    ]{$X_{9}$};
    \node (x2) at (2.5em,0em) [
        anchor=north west,rectangle,draw,align=left,
        minimum width=2.4em,
        minimum height=2.4em
    ]{$X_{10}$};
    \node (x3) at (5em,0em) [
        anchor=north west,rectangle,draw,align=left,
        minimum width=2.4em,
        minimum height=2.4em
    ]{$X_{11}$};
    \node (x4) at (7.5em,0em) [
        anchor=north west,rectangle,draw,align=left,
        minimum width=2.4em,
        minimum height=2.4em
    ]{$X_{12}$};
    \node at (0em,2.5em) [
        anchor=north west,rectangle,align=left,
        minimum width=9.9em,
        minimum height=2.4em
    ]{$\mathcal{X}_3^1$};
    \end{scope}
    \begin{scope}[shift={(32.1em,0em)}]
    \node at (-0.2em,2.7em) [
        anchor=north west,rectangle,draw,dashed,align=left,
        minimum width=10.3em,
        minimum height=5.3em
    ]{};
    \node (x1) at (0em,0em) [
        anchor=north west,rectangle,draw,align=left,
        minimum width=2.4em,
        minimum height=2.4em
    ]{$X_{13}$};
    \node (x2) at (2.5em,0em) [
        anchor=north west,rectangle,draw,align=left,
        minimum width=2.4em,
        minimum height=2.4em
    ]{$X_{14}$};
    \node (x3) at (5em,0em) [
        anchor=north west,rectangle,draw,align=left,
        minimum width=2.4em,
        minimum height=2.4em
    ]{$X_{15}$};
    \node (x4) at (7.5em,0em) [
        anchor=north west,rectangle,draw,align=left,
        minimum width=2.4em,
        minimum height=2.4em
    ]{$X_{16}$};
    \node at (0em,2.5em) [
        anchor=north west,rectangle,align=left,
        minimum width=9.9em,
        minimum height=2.4em
    ]{$\mathcal{X}_4^1$};
    \end{scope}
\end{tikzpicture}
\caption{$\mathcal{X}_i^j$ sets in Lemma~\ref{lem:4split}}
\label{fig:4split}
\end{figure}
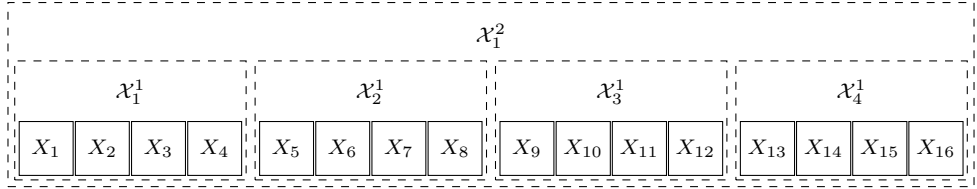

If for some $i,j$ we have $|\bigcap_{l=4i-3}^{4i} \mathcal{X}_{l}^{j}| \geq \brac{1-\gamma}\cdot k$, then we are done.
Thus, we assume that $\forall_{i,j}\ |\bigcap_{l=4i-3}^{4i} \mathcal{X}_{l}^{j}| < \brac{1-\gamma}\cdot k$.
Let $\rho := \frac{3+\gamma}{3} \in \sbrac{1,\frac{4}{3}}$.
We prove that $\forall_{i,j}\ |\mathcal{X}_{i}^{j}| \geq \rho^{j}k$, by induction on $j$.
For $j=0$ the statement is obvious because
$\forall_i\ |\mathcal{X}_i^0| = |X_i| = k = \rho^0 k$.
For $j+1$ and arbitrary $i$, let $k'=\rho^{j}k$. By the induction hypothesis $|\mathcal{X}_{4i-3}^{j}|,\ldots,|\mathcal{X}_{4i}^{j}| \geq \rho^{j}k = k'$.
We may ignore some elements of those sets and assume that $|\mathcal{X}_{4i-3}^{j}| = \ldots = |\mathcal{X}_{4i}^{j}| = k'$,
moreover we assumed that $|\mathcal{X}_{4i-3}^{j} \cap \ldots \cap \mathcal{X}_{4i}^{j}| < \brac{1-\gamma} k = \frac{1-\gamma}{\rho^j} \rho^{j}k = \brac{1-\gamma'}k'$, where $\gamma' \in \sbrac{0,1}$ and $\gamma' > \gamma$.
We apply Lemma~\ref{lem:4sets} and get $|\mathcal{X}_{4i-3}^{j} \cup \ldots \cup \mathcal{X}_{4i}^{j}| \geq \frac{3+\gamma'}{3}k'$.
Thus, $|\mathcal{X}_{i}^{j+1}| \geq \frac{3+\gamma'}{3}k' > \frac{3+\gamma}{3}k' = \rho k' = \rho^{j+1}k$.
\end{proof}


\begin{lemma}\label{lem:lower_52}
If there is an $\tbrac{\alpha,\sigma,M}$-schema, then for every $\epsi > 0$ and for every $\gamma \in \brac{0,1}$, there is a $\tbrac{\frac{5}{4}+\frac{1}{2}\brac{1-\gamma}\alpha, 4^{n}M+\epsi, 4^{n}M+\epsi}$-schema, for some $n := n\brac{\gamma}$.
\end{lemma}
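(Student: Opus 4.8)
The plan is to iterate the branching idea of Lemma~\ref{lem:lower_74} many times simultaneously, using Lemma~\ref{lem:4split} to decide the shape of the final phase. Fix $\omega$ and let $\omega' = \floor{\omega/(\alpha+1)}$; the given schema provides an $\tbrac{\omega', \alpha\omega'-\delta, \sigma, M}$-strategy $S$ with $\delta=\oh{\omega'}$. Presenter lays out $4^n$ disjoint copies of $S$ in consecutive length-$M$ windows covering roughly $\sbrac{0, 4^n M}$, producing color sets $\mathcal{X}_1,\dots,\mathcal{X}_{4^n}$. Then, on each of the two extreme ends, Presenter runs the Epstein–Levy separation strategy for $\omega$ rounds, pushing the "new" colors outward, and records the color sets $\mathcal{Y}_1, \mathcal{Y}_2$ of the $\omega'$ innermost separation intervals on each side, as in the earlier lemmas; set $\mathcal{C}_i = \mathcal{X}_i \cup \mathcal{Y}_{\text{appropriate side}}$ only at the two ends and $\mathcal{C}_i = \mathcal{X}_i$ in the middle. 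Each $\mathcal{X}_i$ (padded by separation colors where available) has size $\omega-\oh{\omega}$, i.e.\ essentially $k := \omega$.

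Next I apply Lemma~\ref{lem:4split} to the family $\mathcal{X}_1,\dots,\mathcal{X}_{4^n}$ with parameter $\gamma$. In the first case the union of all these color sets has size at least $((3+\gamma)/3)^n \omega$; choosing $n = n(\gamma)$ large enough that $((3+\gamma)/3)^n \geq \frac54 + \frac12(1-\gamma)\alpha$ (possible since $(3+\gamma)/3 > 1$), Algorithm has already used that many colors and we are done — here Presenter need not even play a final phase, or plays a trivial clique to be safe. In the second case, the blocks merge into four consecutive groups $Y_1,\dots,Y_4$ (unions of contiguous $\mathcal{X}_j$'s) whose common intersection $\mathcal{I} := Y_1\cap Y_2\cap Y_3\cap Y_4$ has size at least $(1-\gamma)\omega$. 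Now Presenter plays a final phase of $\omega-\omega'$ copies of one long interval spanning from just inside $Y_1$ to just inside $Y_4$ (length at most $\approx 4^n M$, hence the bound $4^n M+\epsi$): this interval meets every interval in all four groups together with the innermost separation intervals at both ends. Since the groups' colors overlap in $\mathcal{I}$, a counting argument — analogous to the "$\norm{\mathcal{C}_1}+\norm{\mathcal{C}_2\setminus\mathcal{C}_1}$" bookkeeping in Lemma~\ref{lem:lower_74}, but now with four groups — shows Algorithm is forced to use at least $\frac54\omega + \frac12(1-\gamma)\alpha\omega - \oh{\omega}$ colors: roughly, the four groups plus the separation endpoints already contribute on the order of $2\omega$ worth of "slots" while sharing only the $(1-\gamma)\omega$-sized core $\mathcal{I}$, and the $\alpha$-strategy copies inside each group contribute the extra $\frac12(1-\gamma)\alpha\omega$ via the intervals whose colors lie in $\mathcal{I}$ needing to be extended by the final phase.

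The main obstacle, and where I expect to spend the real effort, is the case-2 counting: making precise how the shared core $\mathcal{I}$ of size $(1-\gamma)\omega$, combined with the internal structure of each $Y_i$ (each a union of $\alpha$-schema strategies with clique number $\omega'$ and hence carrying $\approx \alpha\omega'$ "rich" colors), forces the stated $\frac54 + \frac12(1-\gamma)\alpha$ ratio rather than something weaker. One has to verify that the final long interval genuinely intersects a clique inside each $Y_i$ whose colors include all of $\mathcal{I}$ (this needs the separation phases to have been run at both ends so that the core colors were pushed into the innermost $\omega'$ intervals there, and needs the geometry of the four merged groups to line up with a single spanning interval of length $\le 4^n M + \epsi$), and that the $\omega-\omega'$ fresh colors of the final phase avoid $\mathcal{I}$. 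The remaining bookkeeping — tracking the $\oh{\omega}$ rounding errors through the $n(\gamma)$-fold construction, and checking all intervals stay within $\sbrac{0, 4^n M + \epsi}$ with lengths in $\sbrac{1, 4^n M+\epsi}$ — is routine but must be stated to conclude that we have produced a $\tbrac{\frac54 + \frac12(1-\gamma)\alpha,\, 4^n M + \epsi,\, 4^n M + \epsi}$-schema.
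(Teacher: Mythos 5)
Your overall architecture (tile $4^n$ copies of the given strategy, apply Lemma~\ref{lem:4split}, split into a ``large union'' case and a ``large common core'' case) matches the paper, and your Case~1 is essentially correct modulo bookkeeping: the sets fed to Lemma~\ref{lem:4split} have size $C=\alpha\omega'-\oh{\omega}$, not $\omega$, so the threshold for $n(\gamma)$ must be computed against $C$ (the paper takes $\omega'=\floor{\omega/2}$ and $n\ge\log_{1+\gamma/3}(5/2-\gamma)$, which works uniformly for $\alpha\in\sbrac{1,3}$). The genuine gap is your Case~2. A final phase consisting of $\omega-\omega'$ copies of a single spanning interval cannot force the claimed number of colors: in the adversarial scenario Algorithm reuses one palette across all $4^n$ games, so $\norm{Y_1\cup\cdots\cup Y_4}\approx\alpha\omega'$, your two end separations add at most $2\omega'$ further colors, and the final clique adds $\omega-\omega'$; with $\omega'=\floor{\omega/(\alpha+1)}$ this totals roughly $2\omega$, which falls short of $\brac{\frac54+\frac12\brac{1-\gamma}\alpha}\omega$ as soon as $\alpha>\frac{3}{2(1-\gamma)}$ --- and reaching such $\alpha$ is the entire point of iterating this lemma towards $5/2$. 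The ``$2\omega$ worth of slots'' you ascribe to the four groups is precisely what Case~2 denies you: the groups share the core $\mathcal{I}$, so their union is small, and no single clique stacked on top can recover the loss.

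The missing idea is a Kierstead--Trotter-style \emph{layered} final phase, with no separation phases at all. The paper sets $\omega'=\floor{\omega/2}$ and, in Case~2, first plays a clique $Z_1$ of $\omega'$ identical intervals covering \emph{all} intervals of the games forming $Y_1$ (and disjoint from those of $Y_2$), and symmetrically a clique $Z_2$ covering all of $Y_4$; because these cover entire groups, $\mathcal{Z}_1$ and $\mathcal{Z}_2$ automatically avoid $\mathcal{I}$, which is why no separation strategy is needed here. It then branches on $\norm{\mathcal{Z}_2\setminus\mathcal{Z}_1}$: if this is at least $\frac{\omega}{4}$, one further clique $W$ of $\omega'$ intervals meeting $Z_1$ and $Z_2$ and covering $Y_2,Y_3$ forces $\norm{\mathcal{W}}+\norm{\mathcal{Z}_1}+\norm{\mathcal{Z}_2\setminus\mathcal{Z}_1}+\norm{\mathcal{I}}\ge\brac{\frac12+\frac12+\frac14}\omega+\frac12\brac{1-\gamma}\alpha\omega-\oh{\omega}$; otherwise $\norm{\mathcal{Z}_1\cap\mathcal{Z}_2}\ge\floor{\frac{\omega}{4}}$ and two overlapping cliques $W_1,W_2$ (covering $Y_2$ and $Y_3$ respectively, with $W_1$ meeting $Z_1$ and $W_2$ meeting $W_1$ and $Z_2$) contribute $2\omega'$ fresh colors disjoint from both $\mathcal{Z}_1\cap\mathcal{Z}_2$ and $\mathcal{I}$, giving the same total. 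The choice $\omega'=\floor{\omega/2}$ is what keeps the whole construction $\omega$-colorable under this stacking; your $\omega'=\floor{\omega/(\alpha+1)}$ is tuned for the single-clique finales of Lemmas~\ref{lem:lower_32} and~\ref{lem:lower_74} and does not fit here.
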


\begin{proof}
Let us fix an $\omega \in \PosNat$, and let $\omega' = \floor{\frac{\omega}{2}}$.
Let $S$ be an $\tbrac{\omega',\alpha\omega' - \delta, \sigma, M}$-strategy for some $\delta = \oh{\omega'}$.
Presenter repeats strategy $S$ in the initial phase $4^n$ times. For each $i \in \sbrac{4^n}$ the $i$-th game is played inside interval $\sbrac{(i-1)(M+\frac{\epsi}{4^n}), (i-1)(M+\frac{\epsi}{4^n})+M}$. See Figure~\ref{fig:lower_52_case1}.
Algorithm uses $C=\alpha\omega'-\delta$ colors in each of these games.
Let $\mathcal{X}_i$ denote the set of $C$ colors used by Algorithm in the $i$-th game.
Let $\mathcal{X}$ denote the set of all colors used in the initial phase, i.e., $\mathcal{X} = \bigcup_{i \in \sbrac{4^n}} \mathcal{X}_i$.

We apply Lemma~\ref{lem:4split} to the family $\mathcal{X}_1,\ldots,\mathcal{X}_{4^n}$
and get that either
the union of these sets has at least $\brac{\frac{3+\gamma}{3}}^{n} C$ elements,
or we get four disjoint consecutive subfamilies $\mathcal{Y}_1, \ldots, \mathcal{Y}_4$
($\mathcal{Y}_i = \bigcup_{j=l_i}^{r_i}\mathcal{X}_j$)
such that the size of the intersection
$\mathcal{Y}_1 \cap \mathcal{Y}_2 \cap \mathcal{Y}_3 \cap \mathcal{Y}_4$
has at least $(1-\gamma) \cdot C$ elements.

\subsubsection*{Case 1.}
If the size of the union $\norm{\mathcal{X}}$ is at least $\brac{1+\frac{\gamma}{3}}^{n} \cdot C$,
then Presenter introduces $\omega'$ intervals, all of them having endpoints $\sbrac{0,4^{n}M+\epsi}$.
See Figure~\ref{fig:lower_52_case1}.
Each interval introduced in the final phase intersects with all intervals introduced in the initial phase.
Thus, Algorithm is forced to use at least $\norm{\mathcal{X}} + \omega' \geq \frac{1}{2}\brac{\brac{1+\frac{\gamma}{3}}^{n}\alpha+1}\omega - \oh{\omega}$ colors in total.
Easy calculation shows that for $\gamma \in \brac{0,1}$, $\alpha \in \sbrac{1,3}$
and for any $ n \geq \log_{1+\frac{\gamma}{3}}\brac{5/2-\gamma}\text{,}$
we have $\frac{1}{2}+\frac{1}{2}\brac{1+\frac{\gamma}{3}}^{n}\alpha \geq \frac{5}{4}+\frac{1}{2}\brac{1-\gamma}\alpha$.

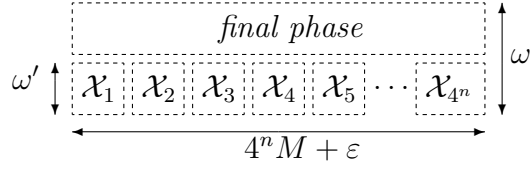
\begin{figure}
\centering
\setlength{\unitlength}{0.09in}
\begin{picture}(30,9)

\put(3.5,2.5){\dashbox{0.25}(3,3)}
\put(4,3.5){$\mathcal{X}_{1}$}
\put(7,2.5){\dashbox{0.25}(3,3)}
\put(7.5,3.5){$\mathcal{X}_{2}$}
\put(10.5,2.5){\dashbox{0.25}(3,3)}
\put(11,3.5){$\mathcal{X}_{3}$}
\put(14,2.5){\dashbox{0.25}(3,3)}
\put(14.5,3.5){$\mathcal{X}_{4}$}
\put(17.5,2.5){\dashbox{0.25}(3,3)}
\put(18,3.5){$\mathcal{X}_{5}$}

\put(21,4){$\ldots$}

\put(23.5,2.5){\dashbox{0.25}(4,3)}
\put(24,3.5){$\mathcal{X}_{4^n}$}

\put(3.5,6){\dashbox{0.25}(24,3)}
\put(12,7){\emph{final phase}}

\put(15.5,1.5){\vector(-1,0){12}}
\put(15.5,1.5){\vector(1,0){12}}
\put(13.5,0){$4^{n}M+\epsi$}

\put(2.5,4.5){\vector(0,1){1}}
\put(2.5,4.5){\vector(0,-1){2}}
\put(0,4){$\omega'$}

\put(28.5,5.5){\vector(0,1){3.5}}
\put(28.5,5.5){\vector(0,-1){3}}
\put(29,5.5){$\omega$}

\end{picture}
\caption{Case 1: $\norm{\mathcal{X}}$ is large}
\label{fig:lower_52_case1}
\end{figure}

\subsubsection*{Case 2.}
The size of the intersection $\norm{\mathcal{Y}_1 \cap \ldots \cap \mathcal{Y}_4}$ is at least $\brac{1-\gamma} \cdot C$.
Let $\mathcal{Y} = \mathcal{Y}_1 \cap \mathcal{Y}_2 \cap \mathcal{Y}_3 \cap \mathcal{Y}_4$ denote the colors that appear in all four parts of the initial phase.
Presenter introduces a set $Z_1$ of $\omega'$ identical intervals covering all intervals contributing to $\mathcal{Y}_1$ and disjoint with intervals contributing to $\mathcal{Y}_2$.
See Figure~\ref{fig:lower_52_case2a}.
Let $\mathcal{Z}_1$ be the set of colors used by Algorithm to color $Z_1$.

Then Presenter introduces a set $Z_2$ of $\omega'$ identical intervals covering all intervals contributing to $\mathcal{Y}_4$ and disjoint with intervals contributing to $\mathcal{Y}_3$.
Let $\mathcal{Z}_2$ be the set of colors used by Algorithm to color $Z_2$.

Clearly, $\norm{\mathcal{Z}_1} = \norm{\mathcal{Z}_2} = \omega'$, and $\mathcal{Z}_1 \cap \mathcal{Y} = \mathcal{Z}_2 \cap \mathcal{Y} = \emptyset$.
Now we distinguish two subcases depending on the size of the set $\mathcal{Z}_2 \setminus \mathcal{Z}_1$.

\begin{figure}
\centering
\setlength{\unitlength}{0.09in}
\begin{picture}(51,16)

\put(3,2.5){\dashbox{0.25}(10,4)}
\put(7,4){$\mathcal{Y}_1$}

\put(15,2.5){\dashbox{0.25}(10,4)}
\put(19,4){$\mathcal{Y}_2$}

\put(27,2.5){\dashbox{0.25}(10,4)}
\put(31,4){$\mathcal{Y}_3$}

\put(39,2.5){\dashbox{0.25}(10,4)}
\put(43,4){$\mathcal{Y}_4$}

\put(3,7){\dashbox{0.25}(11.5,4)}
\put(8,8.5){$\mathcal{Z}_1$}

\put(37.5,7){\dashbox{0.25}(11.5,4)}
\put(43,8.5){$\mathcal{Z}_2$}

\put(13.5,11.5){\dashbox{0.25}(25,4)}
\put(25,13){$\mathcal{W}$}


\put(26,1.5){\vector(-1,0){23}}
\put(26,1.5){\vector(1,0){23}}
\put(23,0){$4^{n}M+\epsi$}


\put(1.5,7){\vector(0,-1){4.5}}
\put(1.5,7){\vector(0,1){4}}
\put(0,6.5){$\omega$}

\put(50,4.5){\vector(0,-1){2}}
\put(50,4.5){\vector(0,1){2}}
\put(50.5,4.5){$\omega'$}

\put(39.5,13.5){\vector(0,-1){2}}
\put(39.5,13.5){\vector(0,1){2}}
\put(40,13.5){$\omega'$}

\end{picture}
\caption{Case 2.1: $\norm{\mathcal{Y}}$ is large and $\norm{\mathcal{Z}_2 \setminus \mathcal{Z}_1} \geq \frac{1}{4}\omega$}
\label{fig:lower_52_case2a}
\end{figure}

\subsubsection*{Case 2.1.}
If $\norm{\mathcal{Z}_{2} \setminus \mathcal{Z}_{1}} \geq \frac{1}{4}\omega$, then Presenter introduces a set $W$ of $\omega'$ identical intervals intersecting all the intervals in $Z_1$ and $Z_2$, and covering all the intervals contributing to $\mathcal{Y}_2$ and $\mathcal{Y}_3$.
Let $\mathcal{W}$ be the set of colors used by Algorithm to color $W$. 
By the definition, we have $\mathcal{W} \cap \mathcal{Y} = \mathcal{W} \cap \mathcal{Z}_1 = \mathcal{W} \cap \mathcal{Z}_2 = \emptyset$.
Algorithm was forced to use $\norm{\mathcal{W}} + \norm{\mathcal{Z}_1} + \norm{\mathcal{Z}_2 \setminus \mathcal{Z}_1} + \norm{\mathcal{Y}} \geq 
\brac{\frac{1}{2}+\frac{1}{2}+\frac{1}{4}}\omega + \frac{1}{2}\brac{1-\gamma}\alpha\omega - \oh{\omega} = \brac{\frac{5}{4} + \frac{1}{2}\brac{1-\gamma}\alpha}\omega - \oh{\omega}$ colors in total.
See Figure~\ref{fig:lower_52_case2a}.

\begin{figure}
\centering
\setlength{\unitlength}{0.09in}
\begin{picture}(51,20)

\put(3,2.5){\dashbox{0.25}(10,4)}
\put(7,4){$\mathcal{Y}_1$}

\put(15,2.5){\dashbox{0.25}(10,4)}
\put(19,4){$\mathcal{Y}_2$}

\put(27,2.5){\dashbox{0.25}(10,4)}
\put(31,4){$\mathcal{Y}_3$}

\put(39,2.5){\dashbox{0.25}(10,4)}
\put(43,4){$\mathcal{Y}_4$}

\put(3,7){\dashbox{0.25}(11.5,4)}
\put(8,8.5){$\mathcal{Z}_1$}

\put(37.5,7){\dashbox{0.25}(11.5,4)}
\put(43,8.5){$\mathcal{Z}_2$}

\put(25.5,11.5){\dashbox{0.25}(13,4)}
\put(31,13){$W_2$}

\put(13,16){\dashbox{0.25}(13,4)}
\put(19,17.5){$W_1$}


\put(26,1.5){\vector(-1,0){23}}
\put(26,1.5){\vector(1,0){23}}
\put(23,0){$4^{n}M+\epsi$}


\put(1.5,7){\vector(0,-1){4.5}}
\put(1.5,7){\vector(0,1){4}}
\put(0,6.5){$\omega$}

\put(50,4.5){\vector(0,-1){2}}
\put(50,4.5){\vector(0,1){2}}
\put(50.5,4.5){$\omega'$}

\put(39.5,13.5){\vector(0,-1){2}}
\put(39.5,13.5){\vector(0,1){2}}
\put(40,13.5){$\omega'$}

\put(12,18){\vector(0,-1){2}}
\put(12,18){\vector(0,1){2}}
\put(9.5,18){$\omega'$}

\end{picture}
\caption{Case 2.2: $\norm{\mathcal{Y}}$ is large and $\norm{\mathcal{Z}_2 \setminus \mathcal{Z}_1} < \frac{1}{4}\omega$}
\label{fig:lower_52_case2b}
\end{figure}

\subsubsection*{Case 2.2.}
If $\norm{\mathcal{Z}_2 \setminus \mathcal{Z}_1} < \frac{1}{4}\omega$, then let $\mathcal{Z} = \mathcal{Z}_1 \cap \mathcal{Z}_2$ and observe that $\norm{\mathcal{Z}} \geq \floor{\frac{\omega}{4}}$.
Presenter introduces a set $W_1$ of $\omega'$ identical intervals intersecting all the intervals in $Z_1$, and covering all the intervals contributing to $\mathcal{Y}_2$. Then, presenter introduces a set $W_2$ of $\omega'$ identical intervals, intersecting all the intervals in $W_1$ and $Z_2$, and covering all the intervals contributing to $\mathcal{Y}_3$.
Let $\mathcal{W}$ be the set of colors used by Algorithm to color intervals in $W_1 \cup W_2$.
We have that $\norm{\mathcal{W}} = 2\omega'$, and $\mathcal{W} \cap \mathcal{Y} = \mathcal{W} \cap \mathcal{Z} = \emptyset$.
Algorithm was forced to use $\norm{\mathcal{W}} + \norm{\mathcal{Z}} + \norm{\mathcal{Y}} \geq \brac{1+\frac{1}{4}}\omega + \frac{1}{2}\brac{1-\gamma}\alpha\omega - \oh{\omega} = \brac{\frac{5}{4} + \frac{1}{2}\brac{1-\gamma}\alpha}\omega - \oh{\omega}$ colors in total.
See Figure~\ref{fig:lower_52_case2b}.
\end{proof}

\begin{corollary}\label{cor:lower_52}
There is an an $\tbrac{\alpha_{n}, 4^{n f\brac{\gamma}}+\epsi, 4^{n f\brac{\gamma}}+\epsi}$-schema,
for every $n \in \PosNat$, every $\epsi > 0$, and every $\gamma \in \brac{0,1}$, where
\[
\alpha_{n} = \frac{5}{2}\cdot\frac{1}{1+\gamma} - \frac{\brac{3-2\gamma}}{2\brac{1+\gamma}}\cdot\brac{\frac{1-\gamma}{2}}^{n}
\text{,}\quad
f\brac{\gamma} = \ceil{ \frac{ \log\brac{\frac{5}{2}-\gamma} }{ \log\brac{1+\frac{\gamma}{3}}} }
\text{.}
\]

\end{corollary}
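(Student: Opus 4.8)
The plan is to start from the $\tbrac{1,1,1}$-schema of the Example and apply Lemma~\ref{lem:lower_52} exactly $n$ times, keeping $\gamma$ fixed throughout and, in each application, instantiating the internal parameter $n(\gamma)$ promised by the lemma as
\[
f(\gamma) \;=\; \ceil{\frac{\log\brac{5/2-\gamma}}{\log\brac{1+\gamma/3}}} .
\]
This is a legitimate choice because the proof of Lemma~\ref{lem:lower_52} only needs $n(\gamma) \geq \log_{1+\gamma/3}\brac{5/2-\gamma}$, and $f(\gamma)$ is by definition the ceiling of exactly that quantity (both logarithms being positive for $\gamma \in \brac{0,1}$). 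The small additive $\epsi$-slacks produced by the successive applications accumulate, but -- exactly as in the proof of Corollary~\ref{cor:lower_32} -- since $\gamma$, hence the multiplicative blow-up factor $4^{f(\gamma)}$, is fixed, one can pick each intermediate slack tiny enough that the total stays below the target $\epsi$; this is routine bookkeeping which I would omit.

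Next I would track the parameters along the iteration. Writing $\tbrac{\alpha_k,\sigma_k,M_k}$ for the schema obtained after $k$ applications, Lemma~\ref{lem:lower_52} gives $M_k = \sigma_k = 4^{k f(\gamma)}$ (up to the suppressed $\epsi$, and noting that the output length bound of the lemma depends only on the input $M$, not on the input $\sigma$), together with $\alpha_0 = 1$ and the affine recurrence
\[
\alpha_{k+1} \;=\; \tfrac{5}{4} + \tfrac{1-\gamma}{2}\,\alpha_k .
\]
The one point that genuinely needs an argument -- and the main, though modest, obstacle -- is that the hypothesis $\alpha_k \in \sbrac{1,3}$ required by Lemma~\ref{lem:lower_52} survives every iteration. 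The map $x \mapsto \tfrac{5}{4} + \tfrac{1-\gamma}{2}x$ has slope $\tfrac{1-\gamma}{2} \in \brac{0,\tfrac12}$ and unique fixed point $\alpha^\star = \tfrac{5/4}{1-(1-\gamma)/2} = \tfrac{5}{2(1+\gamma)}$; since $1 = \alpha_0 < \alpha^\star < \tfrac{5}{2} < 3$, the sequence $\brac{\alpha_k}$ is strictly increasing and stays below $\alpha^\star$, so $\alpha_k \in \sbrac{1,3}$ for all $k$ and all $n$ applications are valid.

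Finally I would solve the recurrence in closed form, using $\alpha_k = \alpha^\star + \brac{\alpha_0 - \alpha^\star}\brac{\tfrac{1-\gamma}{2}}^{k}$ and substituting $\alpha^\star = \tfrac{5}{2(1+\gamma)}$ and $\alpha_0 - \alpha^\star = -\tfrac{3-2\gamma}{2(1+\gamma)}$, which yields
\[
\alpha_n \;=\; \frac{5}{2}\cdot\frac{1}{1+\gamma} \;-\; \frac{3-2\gamma}{2(1+\gamma)}\cdot\brac{\frac{1-\gamma}{2}}^{n},
\]
matching the statement; together with $\sigma_n = M_n = 4^{n f(\gamma)} + \epsi$ this produces the desired $\tbrac{\alpha_n, 4^{n f(\gamma)}+\epsi, 4^{n f(\gamma)}+\epsi}$-schema.
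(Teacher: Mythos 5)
Your proposal is correct and follows essentially the same route as the paper: iterate Lemma~\ref{lem:lower_52} $n$ times starting from the $\tbrac{1,1,1}$-schema with the lemma's internal parameter instantiated as $f(\gamma)$, and solve the affine recurrence $\alpha_{k+1}=\frac{5}{4}+\frac{1-\gamma}{2}\alpha_k$ together with $M_{k+1}=4^{f(\gamma)}M_k$. Your explicit check that $\alpha_k$ remains in $\sbrac{1,3}$ (monotone convergence to the fixed point $\frac{5}{2(1+\gamma)}$) is a detail the paper leaves implicit, but it is a welcome addition rather than a deviation.
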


\begin{proof}
The argument is similar to Corollaries~\ref{cor:lower_53}, and~\ref{cor:lower_74}, but now we  solve the recurrence equations
$\alpha_{0} = 1$, $\alpha_{n+1} = \frac{5}{4} + \frac{1}{2}\brac{1-\gamma}\alpha_{n}$ for competitive ratio,
and $M_{0} = 1$, $M_{n+1} = 4^{f\brac{\gamma}}M_{n}$, $\sigma_{n} = M_{n}$ for region and interval lengths.
\end{proof}

\FiveHalvesThm*

\begin{proof}
Assume for contradiction that, for some $\epsi > 0$, there are $(5/2 - \epsi)$-competitive algorithms for every $\sigma \geq 1$. Setting $\gamma$ small enough and $n$ large enough, Corollary~\ref{cor:lower_52} gives us a $\tbrac{\frac{5}{2} - \frac{\epsi}{4}, \sigma, \sigma}$-schema, for some value of $\sigma$. This means, there is $\omega_P$ such that for every $\omega \geq \omega_P$ there exists an $\tbrac{\omega, (\frac{5}{2} - \frac{2\epsi}{4})\omega, \sigma, \sigma }$-strategy. On the other hand, for the assumed $\sigma$-interval coloring algorithm, there exists $\omega_A$ such that for every $\omega \geq \omega_A$ the algorithm uses at most $\brac{\frac{5}{2}-\frac{3\epsi}{4}}\omega$ colors for every $\omega$-colorable set of intervals. For $\omega=\max(\omega_A,\omega_P)$ we reach a contradiction.
\end{proof}

\bibliographystyle{plainurl}
\bibliography{paper}

\clearpage
\appendix

\section{Open Problems}

Throughout the paper we presented several constructions which can be combined recursively to obtain strategies proving higher and higher lower bounds for larger and larger interval lengths $\sigma$. Table~\ref{tab:strategies} summarizes a selection of these strategies.

\begin{table}
\centering
\caption{Summary of selected strategies for Presenter}
\label{tab:strategies}
\begin{tabular}{@{}lll@{}}
\toprule
ratio & interval length & strategy \\
\midrule
$1.5$  & $1$       & Epstein and Levy~\cite{EpsteinL05} \\
$1.6$  & $2+\epsi$ & Corollary~\ref{cor:informal_lower_32}, $n=2$ iterations \\
\addlinespace
$1.66$ & $1+\epsi$ & Corollary~\ref{cor:lower_53}, $n=1$ iteration \\
$1.72$ & $3+\epsi$ & Corollary~\ref{cor:lower_53}, $n=2$ iterations \\
\addlinespace
$1.75$ & $2+\epsi$ & Corollary~\ref{cor:lower_74}, $n=1$ iteration \\
$1.81$ & $8+\epsi$ & Corollary~\ref{cor:lower_74}, $n=2$ iterations \\
\addlinespace
$2$    & $4^{39}+\epsi$   & Corollary~\ref{cor:lower_52}, $n=3$ iterations, $\gamma=0.21030395$\\
$2.4$  & $4^{486}+\epsi$  & Corollary~\ref{cor:lower_52}, $n=6$ iterations, $\gamma=0.0339$\\
$2.49$ & $4^{7970}+\epsi$ & Corollary~\ref{cor:lower_52}, $n=10$ iterations, $\gamma=0.003449$\\
\bottomrule
\end{tabular}
\end{table}

There are still large gaps between the best known lower and upper bounds for
the optimal competitive ratios for online $\sigma$-interval coloring problems
(see Figure~\ref{fig:gap}).
It would be interesting to close the gap, even for a single specific $\sigma$.
For example, for $\sigma=3/2$ the optimal online algorithm has the
competitive ratio somewhere between $5/3$ and $5/2$.

Finally, let us conjecture that the lower bound of Theorem~\ref{thm:52} is tight.

\begin{conjecture}\label{cnj:upper_52}
There is a $5/2$-competitive online algorithm for $\sigma$-interval coloring,
for every $\sigma \ge 1$.
\end{conjecture}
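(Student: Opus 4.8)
The statement is a conjecture, so the following is a plan of attack rather than a proof. First I would reduce to the case of large $\sigma$: for $\sigma \le 3/2$ Theorem~\ref{thm:algo} already yields a $(1+\sigma) \le 5/2$-competitive algorithm, and for every $\sigma$ the Kierstead--Trotter algorithm~\cite{KiersteadT81} is $3$-competitive, so the entire content of the conjecture is to improve the ratio from $3$ down to $5/2$ once $\sigma > 3/2$ (even the case $\sigma = 2$ is currently open). The plan is to build an algorithm whose structure mirrors the recursive lower bound of Lemma~\ref{lem:lower_52}: prove an \emph{algorithmic recursion on the length scale}, namely a lemma of the shape ``if there is an $r$-competitive algorithm for $\sigma$-interval coloring, then there is a $\brac{\tfrac54 + \tfrac12 r + o(1)}$-competitive algorithm for $(c\sigma)$-interval coloring'' for a suitable constant $c$. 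Iterating this from the base case ($\sigma \le 3/2$, $r = 5/2$, or $\sigma = 1$, $r = 2$) keeps the ratio below the fixed point $5/2$ at every scale, since $r \le 5/2$ implies $\tfrac54 + \tfrac12 r \le 5/2$; this would be the exact algorithmic dual of Corollary~\ref{cor:lower_52}.

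For the recursive step I would cover the line with overlapping \emph{super-blocks} of length $\Theta(\sigma)$ grouped into a constant number of classes so that distinct super-blocks of the same class never interact (as in the proof of Theorem~\ref{thm:algo}), and, inside each super-block, split the presented intervals into \emph{short} ones (length at most $\sigma$) and \emph{long} ones (length in $(\sigma, c\sigma]$). Short intervals are colored recursively by the $r$-competitive $\sigma$-interval algorithm, run independently per super-block. Long intervals, whose lengths lie in a bounded ratio range, are handled by a coarse block/round-robin layer playing the role of the separation and final phases of Lemma~\ref{lem:lower_52}. The coefficient $\tfrac12 r$ would come from the fact that, in the extremal configuration, the recursive (short) layer is charged on an instance of clique number only $\approx \omega/2$ (exactly the $\omega' = \floor{\omega/2}$ of Lemma~\ref{lem:lower_52}), and the additive $\tfrac54$ would be the overhead of the coarse layer, the algorithmic counterpart of the $\tfrac12 + \tfrac12 + \tfrac14$ appearing in Case~1 there. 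The technical heart is to make the palettes of the two layers overlap in the right way rather than keep them disjoint, since disjoint palettes would only give $1 + r$.

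An alternative, non-recursive route would be to strengthen the Kierstead--Trotter level decomposition so as to exploit that bounded interval length caps the effective nesting depth that drives the tight $3\omega - 2$ bound; one would then re-sum the per-level color counts and hope to land at $\tfrac52\omega + O_\sigma(1)$. This would sidestep the recursion bookkeeping but faces the same fundamental hurdle of gaining a full $\tfrac12\omega$ over the known tight bound for unrestricted interval graphs.

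I expect the main obstacle to be that the lower bound derives its strength from \emph{adaptive branching} by Presenter: the Case~1 versus Case~2, and Case~2.1 versus 2.2, splits in Lemma~\ref{lem:lower_52} are chosen only after observing how heavily the algorithm's color sets ($\mathcal{C}_1,\mathcal{C}_2$, then $\mathcal{Z}_1,\mathcal{Z}_2$) overlap. An online algorithm has no such luxury — it must commit to one coloring rule that is simultaneously near-optimal in the ``colors overlap heavily'' regime and in the ``colors stay disjoint'' regime — and it is far from clear that a rule attaining the break-even value $5/2$ in both regimes exists; this is precisely the gap the conjecture leaves open. A secondary difficulty is quantitative control: each application of the recursive step incurs an additive $O_\sigma(1)$ term (Theorem~\ref{thm:algo} already has one) and forces $c$ to grow as one demands a tighter step, so $\sigma$ must be taken astronomically large to approach $5/2$, in line with Table~\ref{tab:strategies}; one must check that over the $\Theta(\log \sigma)$ recursion levels these overheads, together with the $o(\omega)$ rounding slacks, still sum to $o(\omega)$, so that the \emph{asymptotic} ratio is genuinely $\le 5/2$ and not $5/2$ plus a non-vanishing constant.
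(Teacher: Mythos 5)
This statement is Conjecture~\ref{cnj:upper_52}, which the paper explicitly leaves open: there is no proof of it in the paper to compare your attempt against, and your proposal, as you yourself acknowledge, is a research plan rather than a proof. The plan is sensible as far as it goes --- the reduction to $\sigma > 3/2$ via Theorem~\ref{thm:algo} is correct, and an algorithmic recursion of the shape $r \mapsto \frac{5}{4} + \frac{1}{2}r$, dual to Lemma~\ref{lem:lower_52} and Corollary~\ref{cor:lower_52}, would indeed have $5/2$ as its fixed point and so would suffice if it existed. But the entire mathematical content lies in that recursive step, which you state only as a target: it is neither formulated precisely nor proved. In particular, the point where the palette of the recursive (short-interval) layer must overlap with the palette of the coarse layer --- the only thing that would bring the ratio from $1+r$ down to $\frac{5}{4}+\frac{1}{2}r$ --- is exactly the part for which no mechanism is given; an online algorithm cannot know in advance which short-layer colors will be safe to reuse on long intervals, and this is the same adaptivity obstacle you correctly identify as the strength of Presenter's strategy in Lemma~\ref{lem:lower_52}. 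The alternative route via a refined Kierstead--Trotter level decomposition is likewise only named, not carried out. So the gap here is not a fixable detail but the absence of the central argument, which is consistent with the paper recording the statement as an open conjecture rather than a theorem.
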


\begin{figure}[h]
\centering
\includegraphics{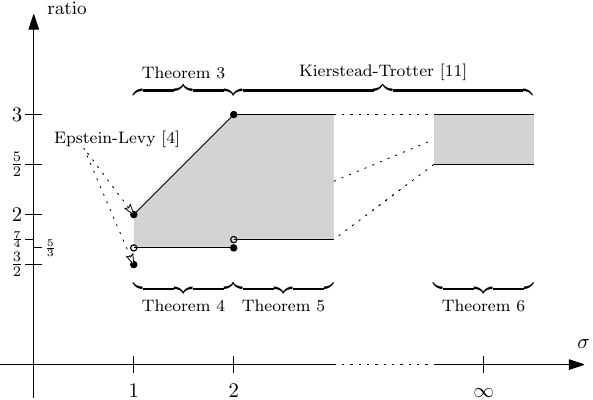}
\caption{Gap between current bounds for competitive ratio of online $\sigma$-interval coloring}
\label{fig:gap}
\end{figure}


\end{document}